\DeclareMathOperator{\Hom}{Hom}
\DeclareMathOperator{\Ann}{Ann}
\DeclareMathOperator{\Mon}{Mon}
\DeclareMathOperator{\LM}{LM}
\theoremstyle{plain}
\newtheorem{thm}{Theorem}[section]
\newtheorem{lem}[thm]{Lemma}
\newtheorem{prop}[thm]{Proposition}
\theoremstyle{definition}
\newtheorem{defn}[thm]{Definition}
\theoremstyle{remark}
\newtheorem{rem}[thm]{Remark}
\newtheorem{exmp}[thm]{Example}
\theoremstyle{remark}
\newtheorem{modification}[thm]{Modification}
\newcommand{\V}{{\mathbf V}}
\newcommand{\Q}{{\mathbb Q}}
\newcommand{\N}{{\mathbb N}}
\newcommand{\Z}{{\mathbb Z}}
\newcommand{\ideal}[1]{{\left\langle{#1}\right\rangle}}
\newcommand{\singular}{{\sc Singular}}
\newcommand{\proofend}[1]{\\[1ex]\parbox[b]{12cm}{#1}\hfill\ }
\begin{document}
\title{Normalization of Rings}

\author[G.-M. Greuel]{Gert-Martin Greuel}
\address {G.-M. Greuel - Department of Mathematics, University of Kaiserslautern \\ P.O. Box 3049 \\ (67653) Kaiserslautern, Germany}
\email{greuel@mathematik.uni-kl.de}
\author{Santiago Laplagne}
\address{S. Laplagne - Departamento de Matem\'atica, FCEN, Universidad de Buenos Aires \\ Ciudad Universitaria, Pabell\'on I - (C1428EGA) \\ Buenos Aires, Argentina}
\email{slaplagn@dm.uba.ar}
\author{Frank Seelisch}
\address{F. Seelisch - Department of Mathematics, University of Kaiserslautern \\ P.O. Box 3049 \\ (67653) Kaiserslautern, Germany}
\email{seelisch@mathematik.uni-kl.de}

\begin{abstract}
We present a new algorithm to compute the integral closure of a reduced Noetherian ring in its total ring of fractions. A modification, applicable in positive characteristic, where actually all computations are over the original ring, is also described. The new algorithm of this paper has been implemented in \singular, for localizations of affine rings with respect to arbitrary monomial orderings. Benchmark tests show that it is in general much faster than any other implementation of normalization algorithms known to us.
\end{abstract}

\subjclass[2000]{Primary 13P10; Secondary 68W30}
\date{January 26, 2010}
\keywords{
normalization, integral closure, test ideal, Grauert-Remmert criterion
}

\maketitle

\section{Introduction}

Computing the normalization of a ring is a major tool in commutative algebra, with applications in algebraic geometry and singularity theory. The first general algorithms were proposed by \citet{Stolzenberg68} and \citet{Seidenberg70, Seidenberg75}. However, the tools involved, such as extensions of the ground field and addition of new indeterminates, make them unsuitable for most practical applications.

In recent years several new and more practicable algorithms using Groebner bases have been proposed. The basic approach, continuing the line of the works mentioned before, is to compute an increasing chain of rings from the original ring to its normalization. This is carried out in the works of \citet{Traverso86}, \citet{Vasconcelos91, Vasconcelos98}, \citet{BreVasc01}. To our knowledge none of these algorithms has been implemented and it remains unclear how practical they are. Also \citet{deJong98}, \citet{DGPJ} follow this path, applying as a new ingredient a criterion for normality due to \citet{GR}. In \citet{DGPJ} they report an effective implementation of their algorithm in \singular~\citep{GPS}. It became the standard algorithm for normalization in computer algebra systems, being now implemented also in Macaulay2 \citep{Macaulay2} and Magma \citep{Magma}. A good review on most of these algorithms can be found in \citet[Chapter 15]{SH}.

Another approach, presented in \citet{GT}, is to use Noether normalization, reduce the problem to the one dimensional case, and apply existing special algorithms for that case \citep{Ford87, Cohen93}. Unfortunately, we do not know of any implementation of these algorithms.

A more recent approach taken in \citet{LP} and \citet{SS} is to compute a decreasing chain of finitely generated modules over the original ring containing the normalization. Their algorithm works only in the case when the base field is of positive characteristic $p$, where they can use the Frobenius map. It has been implemented in Macaulay2 and \singular, and it turns out to be very fast for small $p$. However the computation of the Frobenius makes it impracticable when $p$ is large.

There are also very efficient methods for computing the normalization in some special cases. For example, for toric rings, one can apply fast combinatorial techniques, as explained in \cite{Bruns01}.

The algorithm we propose in this paper is a general algorithm and it is based on \citet{deJong98} and \citet{DGPJ}. In their algorithm, as we mentioned before, they construct an increasing chain of affine rings. They enlarge the rings by computing the endomorphism ring of a test ideal (see below), adding new variables for each module generator of the endomorphism ring and dividing out the relations among them. Then the algorithm is applied recursively to this new affine ring. Due to the increasing number of variables and relations this can produce a big slow-down in the performance of the algorithm already when the number of intermediate rings is 2 or 3. For a larger number, it usually makes the algorithm unusable, as the Groebner bases of the ideals of relations grow extensively. Our approach avoids the increasing complexity when enlarging the rings, benefiting from the finitely generated $A$-module structure of the normalization. We are able to do most computations over the original ring without adding new variables or relations.

The main new results of this paper are presented in Section \ref{section:original}.
In Section \ref{section:algorithm} we describe the algorithm and show, as an application, how the $\delta$-invariant of the ring can be computed. Section \ref{section:examples} contains several benchmark examples and a comparison with previously known algorithms, while Section \ref{section:extension} is devoted to an extension of the algorithm to non-global monomial orderings.

\section{Basic definitions and tools}

Let $A$ be a reduced Noetherian ring.$\!$\footnote{We assume that all rings are commutative with $1$ and that morphisms map $1$ to $1$.} The \emph{normalization} $\bar{A}$ of $A$ is the integral closure of $A$ in the total ring of fractions $Q(A)$, which is the localization of $A$ with respect to the non-zerodivisors on $A$. $A$ is called \emph{normal} if $A = \bar{A}$.

The \emph{conductor} of $A$ in $\bar{A}$ is $C = \{a \in Q(A) \mid a\bar{A} \subset A\} = \Ann_A(\bar{A} / A)$.

\begin{lem}\label{lem:fingenmod}
$\bar{A}$ is a finitely generated $A$-module if and only if $C$ contains a non-zerodivisor on $A$.
\end{lem}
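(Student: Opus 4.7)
The plan is to prove each direction directly, exploiting the fact that non-zerodivisors become units in $Q(A)$ and that $A$ is Noetherian.

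For the forward direction, I would assume $\bar A = A b_1 + \cdots + A b_n$ for some $b_i \in Q(A)$. By definition of $Q(A)$ each $b_i$ can be written as $a_i/s_i$ with $s_i$ a non-zerodivisor on $A$. Since a product of non-zerodivisors in a commutative ring is again a non-zerodivisor, $d := s_1 \cdots s_n$ is a non-zerodivisor on $A$, and $d b_i \in A$ for every $i$. Therefore $d \bar A \subset A$, i.e.\ $d \in C$, and $C$ contains a non-zerodivisor.

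For the backward direction, I would pick a non-zerodivisor $d \in C$. Because non-zerodivisors on $A$ become units in $Q(A)$, $d$ is a unit in $Q(A)$ and hence a non-zerodivisor on the submodule $\bar A \subset Q(A)$. Thus the multiplication map
\[
\mu_d : \bar A \longrightarrow A, \qquad x \longmapsto dx,
\]
is a well-defined (by $d \in C$) injective $A$-module homomorphism. Consequently $\bar A$ is isomorphic to the $A$-submodule $d \bar A$ of $A$, which is an ideal of $A$. Since $A$ is Noetherian this ideal is finitely generated, hence so is $\bar A$.

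I do not expect a real obstacle: the argument is essentially a common-denominator trick one way and a ``multiplication-by-$d$ embeds $\bar A$ as an ideal'' observation the other way. The only point that deserves explicit mention is the (standard) fact that a non-zerodivisor $d \in A$ remains a non-zerodivisor on $\bar A$, which is needed to ensure injectivity of $\mu_d$ and thereby invoke Noetherianity of $A$ to conclude finite generation.
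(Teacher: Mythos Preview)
Your proof is correct and follows essentially the same approach as the paper's: the paper's one-line argument for each direction is exactly your common-denominator trick and your embedding $\bar A\cong d\bar A\subset A$ via multiplication by a non-zerodivisor $d\in C$, combined with Noetherianity of $A$. You have simply spelled out the details (in particular the injectivity of $\mu_d$) that the paper leaves implicit.
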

\begin{proof}
If $p\in C$ is a non-zerodivisor then $\bar{A}\cong p\bar{A} \subset A$ is module-finite over $A$, since $A$ is Noetherian. Conversely, if $\bar{A}$ is module-finite over $A$ then any common multiple of the denominators of a finite set of generators is a non-zerodivisor on $A$ contained in $C$.
\end{proof}

We recall the Grauert and Remmert criterion of normality.

\begin{prop}
\label{prop:testideal}
Let $A$ be a Noetherian reduced ring and $J \subset A$ an ideal satisfying the following conditions:
\begin{enumerate}
\item $J$ contains a non-zerodivisor on $A$,
\item J is a radical ideal,
\item $N(A) \subset V(J)$, where
$$N(A) = \{P \subset A, \mbox{ prime ideal }\mid A_P \mbox{ is not normal}\}$$
is the non-normal locus of $A$.
\end{enumerate}
Then $A$ is normal if and only if $A \cong \Hom_A(J, J)$, via the canonical map which maps $a\in A$
to the multiplication by $a$.
\end{prop}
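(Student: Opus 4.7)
The plan is to identify $\Hom_A(J,J)$ with the ring
\[
B := \{r \in Q(A) \mid rJ \subset J\},
\]
to place it as $A \subset B \subset \bar A$, and then to reduce the proposition to showing that $A = B$ precisely when $A = \bar A$.

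First I would check that the canonical map $A \to \Hom_A(J,J)$ is injective: if $aJ = 0$, then $a$ annihilates the non-zerodivisor in $J$, so $a = 0$. Next, for any $\varphi \in \Hom_A(J,J)$ and any non-zerodivisor $p \in J$, the identity $p\varphi(x) = \varphi(px) = x\varphi(p)$ forces $\varphi$ to be multiplication by $r := \varphi(p)/p \in Q(A)$, with $rJ \subset J$; this yields a natural isomorphism $\Hom_A(J,J) \cong B$ under which the image of $A$ is exactly $A \subset B$. Since $J$ is a finitely generated faithful $A$-module (using $A$ Noetherian and the non-zerodivisor in $J$), the determinant trick gives $B \subset \bar A$. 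Hence if $A = \bar A$, then $A \subset B \subset \bar A = A$, proving one direction.

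The main obstacle is the converse: assuming $A$ is not normal, I need to produce an element of $B \setminus A$. Picking any $b \in \bar A \setminus A$ and setting $M := A[b]/A$, the locality hypothesis (3) forces $M_P = 0$ whenever $P \not\supset J$, because then $A_P$ is normal and $b \in \bar A \subset A_P$. Thus the support of $M$ lies in $V(J)$, and Noetherianity together with the radicality of $J$ give $J^n \cdot A[b] \subset A$ for some $n \geq 1$. I choose $n$ minimal and take $y \in J^{n-1} A[b] \setminus A$, which exists by that minimality. Then $y \in \bar A \setminus A$, and because every power $y^k$ still lies in $A[b]$, one has $J \cdot y^k \subset A$ for all $k \geq 1$. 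The radicality of $J$ then lets me upgrade $Jy \subset A$ to $Jy \subset J$: for any $j \in J$,
\[
(jy)^{n+1} \;=\; j^{n+1} y^{n+1} \;\in\; J \cdot (J^n y^{n+1}) \;\subset\; J \cdot A \;=\; J,
\]
so $jy \in \sqrt{J} = J$. Thus $y \in B \setminus A$ and the canonical map is not surjective, establishing the contrapositive. I expect the one genuinely delicate step to be this last one, where the minimality of $n$ must be combined with the radicality hypothesis to jump from containment in $A$ to containment in $J$; the remaining steps are routine applications of the determinant trick and a localization argument.
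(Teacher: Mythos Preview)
The paper does not prove this proposition; it merely recalls it as the Grauert--Remmert criterion and cites \cite{GR}, so there is no in-paper argument to compare against. Your proof is the standard one and is essentially correct.

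One small slip: the claim ``$J \cdot y^k \subset A$ for all $k \geq 1$'' does not follow from what you have (only $J^n A[b] \subset A$ is known, so in general only $J^n y^k \subset A$). Fortunately you never use the incorrect version: the displayed computation
\[
(jy)^{n+1} = j^{n+1} y^{n+1} \in J \cdot \bigl(J^n y^{n+1}\bigr) \subset J \cdot A = J
\]
only needs $J^n y^{n+1} \subset J^n A[b] \subset A$, which is true because $y \in J^{n-1}A[b] \subset A[b]$ and $A[b]$ is a ring. So just replace the offending sentence by ``since $y^{n+1} \in A[b]$, we have $J^n y^{n+1} \subset A$'' and the argument is clean. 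Everything else---the identification $\Hom_A(J,J) \cong B$, the determinant trick giving $B \subset \bar A$, the support computation forcing $J^n A[b] \subset A$, and the radicality step---is correct as written.
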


\begin{defn}
\label{defn:teststuff}
An ideal $J \subset A$ satisfying properties (1)$-$(3) is called a \emph{test ideal} (for the normalization) of $A$. A pair $(J, p)$ with $J$ a test ideal and $p\in J$ a non-zerodivisor on $A$ is called a \emph{test pair} for $A$.
\end{defn}
By Lemma \ref{lem:fingenmod}, test pairs exist if and only if $\bar{A}$ is module-finite over $A$. We can choose any radical ideal $J$ such that $p\in J\subseteq\sqrt{C}$.

Our algorithm computes the normalization of $A$ when a test pair for $A$ is known. If $A$ is a reduced, finitely generated $k$-algebra with $k$ a perfect field, then $C$ contains a non-zerodivisor which can be computed by using the Jacobian ideal (cf.~Lemma \ref{lem:jacobperf} and Remark \ref{rem:knotperf}). The same holds for localizations of such $k$-algebras w.r.t.~any monomial ordering. Indeed, our algorithm is slightly more general, working whenever the Jacobian ideal does not vanish.

If $A$ is not normal, we get a proper ring extension $A \subsetneq \Hom_A(J, J)=:A_1$.

If $A_1$ is not normal, which is checked by applying Proposition \ref{prop:testideal} to $A_1$, we obtain a new ring $A_2$ by that same proposition, which then has to be tested for normality, and so on. That is, we get a chain of inclusions of rings
$$A \subset A_1 \subset A_2 \subset \dots $$
(with $A_i = A[t_1, \dots, t_{s_i}]/I_i$, $I_i$ ideal, and natural maps $\psi_i: A \hookrightarrow A_i$).

If at some point, we get a normal ring $A_N$, then $A_N \cong \bar A$ by Lemma \ref{lem:abnorm}. 
This guarantees that if $\bar{A}$ is a finitely generated $A$-module, the chain will become stationary with $A_N$ normal, 
giving an algorithm to compute the normalization.

\vspace{0.4cm}

The fact which makes the whole algorithm practicable, is the isomorphism
$$\Hom_A(J, J)\cong 1/p\cdot(pJ:_A J),$$
allowing us to compute $\Hom_A(J, J)$ over $A$. This fact, not contained in \citet{deJong98}, was found by the first named author during the implementation of the algorithm in \singular\ and first published in \citet{DGPJ} (see also \citet[Lemma 3.6.1]{GP} and \citet{GT} for related statements). We shall prove a generalization of this isomorphism in Lemma \ref{lem:newlem}, which will be needed in the new algorithm.

The following lemma describes the $A$-algebra structure of $1/p\cdot(pJ:_A J)$. This will allow us to compute the normalization of $A$ recursively.

\begin{lem}
\label{lem:aalgisom}
Let $A$ be a reduced Noetherian ring, and $(J, p)$ a test pair for $A$. Let $\{u_0 = p, u_1, \dots, u_s\}$ be a system of generators for the A-module $pJ :_A J$. If $t_1, \dots, t_s$ denote new variables, then $t_j \mapsto u_j/p$, $1\leq j\leq s$, defines an isomorphism of $A$-algebras
$$A_1 := A[t_1, \dots, t_s]/{I_1} \stackrel{\cong}{\longrightarrow} \frac{1}{p} (pJ:_A J),$$
where $I_1$ is the kernel of the map $t_j \mapsto u_j/p$ from $A[t_1, \dots, t_s] \rightarrow \frac{1}{p} (pJ:_A J)$.
\end{lem}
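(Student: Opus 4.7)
The plan is to reduce the statement to a direct application of the first isomorphism theorem, so the work is to set up the relevant $A$-algebra homomorphism and verify surjectivity; the definition of $I_1$ as the kernel then makes the statement automatic.

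First I would argue that $\frac{1}{p}(pJ:_A J)$ is an $A$-subalgebra of $Q(A)$. The preceding paragraph already cites the isomorphism $\Hom_A(J,J)\cong \frac{1}{p}(pJ:_A J)$, so the ring structure is inherited from composition of endomorphisms; if one prefers a direct verification, it suffices to observe that $A=\frac{1}{p}(pA)\subseteq \frac{1}{p}(pJ:_A J)$ (because $J$ is an ideal, $paJ\subseteq pJ$ for $a\in A$), and that if $a,b\in pJ:_A J$ then $abJ\subseteq a(pJ)=p(aJ)\subseteq p^2 J$, so using that $p$ is a non-zerodivisor one deduces $ab/p\in A$ and $(ab/p)J\subseteq pJ$. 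Thus the product $(a/p)(b/p)=ab/p^2=(ab/p)/p$ again lies in $\frac{1}{p}(pJ:_A J)$.

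Next I would define the $A$-algebra homomorphism
\[
\phi\colon A[t_1,\dots,t_s]\longrightarrow \tfrac{1}{p}(pJ:_A J),\qquad t_j\mapsto u_j/p,
\]
which is well-defined by the universal property of polynomial rings because each $u_j/p$ lies in the target (as $u_j\in pJ:_A J$). Surjectivity is immediate from the hypothesis that $u_0=p,u_1,\dots,u_s$ generate $pJ:_A J$ as an $A$-module: any element of the codomain has the form $\frac{1}{p}\sum_{j=0}^{s}a_j u_j$ with $a_j\in A$, and this equals $a_0+\sum_{j=1}^{s}a_j\phi(t_j)=\phi\bigl(a_0+\sum_{j=1}^{s}a_j t_j\bigr)$, using $u_0/p=1$.

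Finally, setting $I_1:=\ker\phi$ and invoking the first isomorphism theorem yields the $A$-algebra isomorphism $A[t_1,\dots,t_s]/I_1\xrightarrow{\cong}\frac{1}{p}(pJ:_A J)$ induced by $t_j\mapsto u_j/p$. There is no serious obstacle here: the only subtlety is the verification that the target is a ring, which is either a one-line citation to the preceding $\Hom_A(J,J)$-isomorphism or the short direct calculation sketched above.
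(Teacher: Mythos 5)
Your proof is correct. The paper itself does not prove this lemma (it only points to \citet[Lemma 3.6.7]{GP} for the computational aspects), and your argument is the standard one that the reference supplies: identify $\frac{1}{p}(pJ:_A J)$ as an $A$-subalgebra of $Q(A)$, map the polynomial ring onto it using the given module generators together with $u_0/p=1$, and quote the first isomorphism theorem. The only place where you should add a word is the closure-under-multiplication step: from $abJ\subseteq p^2J$ you conclude $ab/p\in A$, but this uses that $p\in J$ (part of the definition of a test pair), so that $abp\in p^2J$ and cancelling the non-zerodivisor $p$ gives $ab\in pJ\subseteq pA$. With that remark made explicit, the verification that the target is a ring is complete, and the rest of the argument goes through without further issues.
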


See \citet[Lemma 3.6.7]{GP} for the computational aspects of this lemma.

\begin{exmp}
\label{exmp:x2-y3}
Let $I =\ideal{x^2 - y^3}\subset k[x, y]$ and $A=k[x,y]/I$. We take the test pair $(J, p)$, with $J := \ideal{x, y}_A$ (the radical of the singular locus of $A$) and $p := x$ (see Algorithm \ref{algorithm:testpair}). Then $pJ :_A J = \ideal{x, y^2}_A$ and $1/p\cdot(pJ:_A J) =  1/x\cdot\ideal{x, y^2}_A\cong A_1 := A[t]/I_1$ where $I_1 = \langle t^2-y, yt-x, y^2-xt \rangle_{A[t]}$. The isomorphism is given by $t \mapsto y^2/x$.
\end{exmp}

The following easy lemma gives a normalization criterion for ring extensions. It provides a convenient way to prove correctness of our normalization algorithm, or any modification, because it is independent of the intermediate steps.

\begin{lem}
\label{lem:abnorm}
Let $\psi: A \rightarrow B$ be a map between reduced Noetherian rings satisfying the following conditions:
\begin{enumerate}
\item $\psi$ is injective,
\item $\psi$ is finite,
\item \label{item:AB}$B$ is contained in $Q(\psi(A))$.
\end{enumerate}
Then $\psi$ induces isomorphisms $Q(A) \rightarrow Q(B)$ and
$\bar A \rightarrow \bar B$. In particular, if $B$ is integrally closed, then $\bar A$ is isomorphic to $B$.
\end{lem}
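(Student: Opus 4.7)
The plan is to identify $A$ with its image under $\psi$ and reduce to the case $A \subseteq B \subseteq Q(A)$, after which the three claims follow from careful book-keeping of (non-)zerodivisors. I do not expect any single step to be deep; the main technical point is checking that non-zerodivisors are preserved in both directions so that the natural candidate maps between $Q(A)$ and $Q(B)$ are actually well-defined and mutually inverse.

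First I would show that every non-zerodivisor $s\in A$ is also a non-zerodivisor on $B$. Given $b\in B$ with $sb=0$, use condition (\ref{item:AB}) to write $b=a/t$ with $a,t\in A$ and $t$ a non-zerodivisor on $A$; then $sa/t = 0$ in $Q(A)$ forces $sa=0$ in $A$, hence $a=0$ and $b=0$. Thus $\psi$ extends to a ring map $Q(A)\to Q(B)$; injectivity is immediate from injectivity of $\psi$ (together with the fact that the denominators are invertible in $Q(B)$).

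Second I would show the extension is surjective. Any element of $Q(B)$ has the form $b/c$ with $c\in B$ a non-zerodivisor on $B$. By (\ref{item:AB}), $b,c\in Q(A)$. It suffices to show $c$ is a unit in $Q(A)$. Since $Q(A)$ is a total ring of fractions, $c$ is either a unit or a zerodivisor there; if $cx=0$ in $Q(A)$ with $x=a/s\neq 0$, then $ca=0$ in $Q(A)$, hence in $A\subseteq B$, and since $c$ is a non-zerodivisor on $B$ we get $a=0$, a contradiction. So $c$ is invertible in $Q(A)$ and $b/c\in Q(A)$, proving $Q(A)\to Q(B)$ is an isomorphism.

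Finally, under the identification $Q(A)=Q(B)$ both integral closures live in the same ring. Since $B$ is finite (and in particular integral) over $A$, any element integral over $B$ is integral over $A$, giving $\bar B\subseteq\bar A$; since $A\subseteq B$, we trivially have $\bar A\subseteq\bar B$. Hence $\bar A=\bar B$, and the last assertion of the lemma is immediate: if $B$ is integrally closed, then $B=\bar B=\bar A$.
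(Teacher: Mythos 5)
Your proof is correct and follows essentially the same route as the paper's: identify $A$ with $\psi(A)$, show $Q(A)\to Q(B)$ is an isomorphism using condition (3), and then compare integral closures inside the common total ring of fractions via finiteness of $\psi$. You simply spell out the details (preservation of non-zerodivisors in both directions, surjectivity via the unit-or-zerodivisor dichotomy in $Q(A)$) that the paper dismisses as ``clear by (3)''.
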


\begin{pf}
Since $A \hookrightarrow B$ is injective, so is $Q(A) \hookrightarrow Q(B)$ and hence $\bar{A} \hookrightarrow \bar{B}$. The isomorphism $Q(A) \rightarrow Q(B)$ is clear by \ref{item:AB}. The finiteness of $\psi$ implies that $B$ (and therefore $\bar B$) is integral over $A$. Since $\psi(A) \subseteq B \subseteq \bar B  \subseteq Q(B) = Q(\psi(A))$, we conclude that $\bar B$ is the normalization of $\psi(A)$, which immediately implies the isomorphism $\bar A \rightarrow \bar B$.
\end{pf}

\section{Computing over the original ring}
\label{section:original}

It has already been noticed by many authors (see for example the comments preceding Prop. 6.65 of \citet{Vasconcelos05}) that the chain of rings mentioned in last section, or similar constructions where the number of variables and relations increase in each step, behaves poorly in practice. (See also Remark \ref{rem:normalc}.)

There has been therefore a search for algorithms carrying out most of the computations  in the original ring. In \citet{Vasconcelos00}, the author proposes to use
$$B = \bigcup_{n \geq 1} \Hom_S(I^n, I^n),$$
where $S$ is a hypersurface ring over which $A$ is finite and birational and $I$ is the annihilator of the $S$-module $A/S$. However, as mentioned in that same paper, computing $B$ is still the hard part of the algorithm and there is no indication on how to do it.

In this section we show that a chain of ring as used in \citet{DGPJ} can be constructed doing most computations over the original ring. In this way we obtain an algorithm that is usually much faster in practice.

The purpose of this section is not only to show that the computations over the original ring are possible. The proofs which we provide show also how these computations can be done and thus prepare the algorithms presented in the next section.

We start with a generalization of the isomorphism from the previous section, expressing $\Hom_A(J, J)$ as an ideal quotient, to be used later. We formulate a more general version than needed. For a related statement see \citet{SH}.

\begin{lem} \label{lem:newlem}
Let $A$ be a reduced (not necessarily Noetherian) ring, $Q(A)$ its total ring of fractions, and $I, J$ two $A$-submodules of $Q(A)$. Assume that $I$ contains a non-zerodivisor $p$ on $A$.
\begin{enumerate}
\item The map
$$ \Phi:\Hom_A(I, J)\stackrel{\cong}{\longrightarrow}\frac{1}{p}(pJ:_{Q(A)}I)=J:_{Q(A)}I,\:\:\:\varphi\mapsto \frac{\varphi(p)}{p}, $$
is independent of the choice of $p$ and an isomorphism of $A$-modules.
\item If $J\subset A$ then
$$ pJ:_{Q(A)}I=pJ:_AI. $$
\end{enumerate}
\end{lem}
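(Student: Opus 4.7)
The plan is to treat (1) as a standard "evaluate at a distinguished element" isomorphism (much like $\Hom_A(A,M)\cong M$) and (2) as a short ideal-quotient calculation; nothing deep is involved, but care is needed to keep track of where each element lives.

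First, for part (1), I would check in order: well-definedness, independence of $p$, $A$-linearity, and bijectivity. For well-definedness, I note that $p$ is a non-zerodivisor in $A$ and hence a unit in $Q(A)$, so $\varphi(p)/p\in Q(A)$ makes sense; and for any $x\in I$ the identity $p\,\varphi(x)=\varphi(px)=\varphi(xp)=x\,\varphi(p)$ gives $(\varphi(p)/p)\cdot x=\varphi(x)\in J$, so $\varphi(p)/p\in J:_{Q(A)}I$. The same identity, applied with a second non-zerodivisor $p'\in I$, yields $p'\varphi(p)=p\,\varphi(p')$ and hence $\varphi(p)/p=\varphi(p')/p'$, giving independence of $p$. $A$-linearity is immediate because evaluation at a fixed element is $A$-linear.

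Next I would construct the inverse $\Psi:J:_{Q(A)}I\to \Hom_A(I,J)$, $q\mapsto \mu_q$, where $\mu_q(x):=qx$ for $x\in I$. By definition of the colon module $\mu_q$ lands in $J$ and is $A$-linear, so $\mu_q\in\Hom_A(I,J)$. The composition $\Phi\circ\Psi$ sends $q$ to $\mu_q(p)/p=qp/p=q$. Conversely, $\Psi\circ\Phi$ sends $\varphi$ to multiplication by $\varphi(p)/p$, and the identity $\varphi(x)=x\,\varphi(p)/p$ derived above shows this agrees with $\varphi$ on all of $I$. Finally, the identification $\tfrac1p(pJ:_{Q(A)}I)=J:_{Q(A)}I$ is just the observation that multiplication by $p$ is a bijection on $Q(A)$.

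For part (2), the inclusion $pJ:_A I\subseteq pJ:_{Q(A)}I$ is obvious. For the reverse, suppose $q\in Q(A)$ satisfies $qI\subseteq pJ$. Taking $p\in I$ gives $qp\in pJ\subseteq pA$, so $qp=pa$ for some $a\in A$, and since $p$ is a non-zerodivisor this forces $q=a\in A$. Thus $q\in pJ:_A I$. I do not expect any serious obstacle here; the only subtle point is being explicit that $p$ is a unit in $Q(A)$ (so that division by $p$ is legitimate) and a non-zerodivisor in $A$ (so that the descent in part (2) works).
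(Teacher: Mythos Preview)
Your overall strategy matches the paper's: establish the identity $p\,\varphi(f)=f\,\varphi(p)$, deduce well-definedness and independence of $p$ from it, and exhibit the inverse via the multiplication maps $\mu_q$. There is, however, a genuine gap in your treatment of part~(1). You write
\[
p\,\varphi(x)=\varphi(px)=\varphi(xp)=x\,\varphi(p),
\]
pulling first $p$ and then $x$ through the $A$-linear map $\varphi$. But $I$ is only an $A$-submodule of $Q(A)$: neither $p$ nor a general $x\in I$ need lie in $A$, and $A$-linearity does not allow you to extract scalars from $Q(A)\setminus A$. Worse, $\varphi(px)$ may not even be defined, since $px$ need not lie in $I$; for instance, take $A=k[t]$, $I=\tfrac{1}{t}A\subset Q(A)$, and $p=x=\tfrac{1}{t}$, so that $px=\tfrac{1}{t^{2}}\notin I$. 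The same problem recurs in your independence step $p'\varphi(p)=p\,\varphi(p')$. Note that this matters for the paper's application (Theorem~\ref{thm:jihi}), where the lemma is invoked with $I=J'=\tfrac{1}{d}H\not\subset A$.

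The paper closes this gap by first clearing denominators: writing $p=p_1/p_0$ and $f=f_1/f_0$ with $p_0,f_0\in A$ non-zerodivisors, one has $f_0p_1,\,p_0f_1\in A$ and $(f_0p_1)f=p_1f_1=(p_0f_1)p$ in $I$, so that
\[
(f_0p_1)\,\varphi(f)=\varphi\bigl((f_0p_1)f\bigr)=\varphi\bigl((p_0f_1)p\bigr)=(p_0f_1)\,\varphi(p),
\]
which is exactly $p_0f_0\cdot\bigl(p\,\varphi(f)-f\,\varphi(p)\bigr)=0$; cancelling the non-zerodivisor $p_0f_0$ yields the desired identity. With this correction your argument goes through unchanged. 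Your proof of (2) is correct and in fact slightly more direct than the paper's, which instead uses the description $pJ:_{Q(A)}I=\{\varphi(p)\mid\varphi\in\Hom_A(I,J)\}$ obtained in (1) together with $\varphi(p)\in J\subset A$.
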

\begin{pf}
(1) Let $q \in I$ be another non-zerodivisor on $A$. Write $p=p_1/p_0$ and $q=q_1/q_0$, with $p_0,q_0$ non-zerodivisors contained in $A$ and $p_1,q_1\in A$.

Then $c:=p_0q_0\in A$ is a non-zerodivisor and $cp,cq\in A$ with $cpq\in I$. Since $\varphi\in \Hom_A(I,J)$ is $A$-linear, we can write
$$ cp\varphi(q)=\varphi(cpq)=cq\varphi(p), $$
whence $\varphi(p)/p=\varphi(q)/q$ in $Q(A)$, showing that $\Phi$ is independent of $p$.

Moreover, for any $f\in I$ we have
$$ \frac{\varphi(p)}{p}\cdot f=\frac{\varphi(cp)}{cp}\cdot f=\frac{\varphi(cpf)}{cp}=\frac{cp\varphi(f)}{cp}=\varphi(f)\in J, $$
in particular $\varphi(p)\cdot f\in pJ$. This shows that the image $\Phi(\varphi)$ is in $1/p\cdot(pJ:_{Q(A)}I)$. It also shows that $\varphi(p)=0 \Leftrightarrow \forall\:f\in I\:\:\varphi(f)=0\Leftrightarrow \varphi=0$ and hence that $\Phi$ is injective.

To see that $\Phi$ is surjective, let $q\in Q(A)$ satisfy $qI\subset J$. Denote by $m_q\in \Hom_A(I,J)$ the multiplication by $q$. Then $\Phi(m_q)=qp/p=q$ showing that $\Phi$ is surjective.

(2) During the proof of (1)~we have seen that
$$ pJ:_{Q(A)}I=\{\varphi(p)\:\mid\:\varphi\in \Hom_A(I, J)\}. $$
Hence, the claimed equality holds if and only if $\varphi(p)\in A$ for all $\varphi\in \Hom_A(I,J)$, which is clearly true if $J\subset A$.
\end{pf}
%

Recall the chain of extension rings from last section $A \subset A_1 \subset A_2 \subset \dots$ We have seen that we can compute the normalization of $A$ by computing the normalization of $A_i$ (Lemma \ref{lem:abnorm}). The next proposition explains how to obtain a test pair in $A_i$ from a given test pair in $A$. This is the only computation to be carried out in $A_i$.

\begin{prop}
\label{prop:testpair}
Let $A$ be a reduced Noetherian ring, $A' = A[t_1, \dots, t_s]/I'$ a finite extension ring, with natural inclusion $\psi: A \hookrightarrow A'$. If $(J,p)$ is a test pair for $A$ then setting $J' = \sqrt{\langle \psi(J)\rangle_{A'}}$, $(J',\psi(p))$ is a test pair for $A'$.
\end{prop}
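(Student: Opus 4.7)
The plan is to verify the three defining conditions (1)--(3) of a test pair for the candidate $(J',\psi(p))$ in $A'$: that $J'$ is radical, that $\psi(p) \in J'$ is a non-zerodivisor on $A'$, and that the non-normal locus $N(A')$ is contained in $V(J')$.

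Condition (2) is immediate from the definition $J' = \sqrt{\langle\psi(J)\rangle_{A'}}$. For (1), the containment $\psi(p) \in J'$ is tautological since $\psi(p) \in \psi(J) \subseteq J'$. To see that $\psi(p)$ is a non-zerodivisor on $A'$, I invoke the implicit embedding $A' \subseteq Q(A)$ built into the construction of the chain via Lemma \ref{lem:aalgisom}: under this embedding $p$ is already a unit in $Q(A)$, so $\psi(p)$ is a unit, and a fortiori a non-zerodivisor, in $A'$.

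The substantial part is condition (3). Given a prime $P' \in N(A')$, I want $J' \subseteq P'$. Since $P'$ is prime and hence radical, this is equivalent to $\psi(J) \subseteq P'$, i.e.\ to $J \subseteq P := \psi^{-1}(P')$. Because $(J,p)$ is a test pair for $A$, it suffices to prove $P \in N(A)$, and I argue the contrapositive: suppose $A_P$ is normal, and deduce that $A'_{P'}$ is normal as well. Localizing the finite extension $\psi$ first at the multiplicative set $A \setminus P$ and then at $P'$ gives a finite integral extension $A_P \hookrightarrow A'_{P'}$, and the embedding $A' \subseteq Q(A)$ ensures $A'_{P'} \subseteq Q(A_P)$. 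Every element of $A'_{P'}$ is then integral over $A_P$ inside $Q(A_P)$, so normality of $A_P$ forces $A'_{P'} \subseteq A_P$; combined with the reverse inclusion, $A'_{P'} = A_P$ is normal, contradicting $P' \in N(A')$.

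The main obstacle I expect is the localization bookkeeping in the last step: one must check that $A_P \to A'_{P'}$ is injective, finite, and lands inside $Q(A_P)$. All of this follows from $A \subseteq A' \subseteq Q(A)$ together with the finiteness of $\psi$, but it deserves an explicit sentence in a clean write-up, since the definitions conflate several multiplicative systems.
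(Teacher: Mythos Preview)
Your proof is correct in substance but follows a different route from the paper's. The paper argues globally via the conductor: since $Q(A)=Q(A')$ one has $\bar A=\overline{A'}$, and hence the conductor $C$ of $A$ satisfies $\psi(C)\subset C'$, the conductor of $A'$. Then $N(A')=V(C')\subset V(\psi(C)A')$, and combining this with $N(A)=V(C)\subset V(J)$ yields $N(A')\subset V(\psi(J)A')=V(J')$ in two lines, with no localization at individual primes.

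Your approach instead works prime by prime, showing that $A_P$ normal forces $A'_{P'}$ normal. This is valid, but the sentence ``gives a finite integral extension $A_P\hookrightarrow A'_{P'}$'' is not accurate as written: what is finite is $A_P\to (A')_P$, while the further localization $(A')_P\to A'_{P'}$ need not preserve integrality over $A_P$ in general (think of a prime of $A'$ lying over a point where $A\to A'$ has several sheets). The clean fix is exactly the bookkeeping you flag in your last paragraph: from $A\subset A'\subset\bar A$ and $(\bar A)_P=\overline{A_P}=A_P$ one gets $(A')_P=A_P$; then $P'(A')_P=PA_P$ and hence $A'_{P'}=A_P$ is normal. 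With this rearrangement the argument is complete. The conductor approach of the paper buys you exactly the avoidance of this local analysis; your approach, on the other hand, does not rely on knowing $N(A)=V(C)$ and is closer to the definition.
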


\begin{pf}
Let $C$ be the conductor of $A$ in $Q(A)$ and $C'$ the conductor of $A'$ in $Q(A')$. We know that $N(A') = V(C')$, $N(A) = V(C)$ and $\psi(C) \subset C'$. Therefore $V(C') \subset V(C)$, which proves that $N(A') \subset N(\psi(A))$ since $\psi(A) \cong A$. We have $N(A) \subset V(J)$ by definition of $J$, and hence $N(A') \subset V(\psi(J))$. Now $\psi(p) \in J'$ is a non-zerodivisor on $A'$ and $(J', \psi(p))$ is a test pair for $A'$.
\end{pf}

\begin{exmp}
\label{exmp:J1}
Recall Example \ref{exmp:x2-y3}. We started with $A = k[x, y]/\langle x^2 - y^3 \rangle$ and test pair $(J, p) = (\langle x, y \rangle, x)$ and obtained $A_1 := A[t]/I_1 \cong 1/d_1 \cdot U_1$ where $I_1 = \langle t^2-y, yt-x, y^2-xt \rangle$, $d_1 = x$ and $U_1 = \langle x, y^2\rangle_A$.

We now compute $J_1 = \sqrt{\langle\psi_1(J)\rangle_{A_1}} = \sqrt{\langle x, y \rangle_{A_1}} = \langle x, y, t \rangle_{A_1} = \langle t \rangle_{A_1}$ (since $t^2 = y$ and $t^3 = x$ in $A_1$). Therefore $(\langle t \rangle, x)$ is a test pair for $A_1$.
\end{exmp}

For the remainder of this section, let $R$ be a Noetherian ring, $I \subset R$ a radical ideal and $A = R / I$.

We are mainly interested in $R = k[x_1, \dots, x_n]$ with $k$ a field (which the reader may assume in the following), or $R = k[x_1, x_2,\dots, x_n]_>$ with $>$ an arbitrary monomial ordering. However the proposed method works quite generally, whenever a test pair is known.


In the new algorithm, we will compute ideals $U_1, U_2, \dots, U_N$ of $A$ and non-zerodivi\-sors $d_i\in U_i, 1\leq i\leq N$, on $A$ such that
$$A \subset \frac{1}{d_1}U_1 \subset \frac{1}{d_2}U_2 \subset \dots \subset \frac{1}{d_N}U_N = \bar{A}.$$
From the construction we know that $1/d_i\cdot U_i$ is a finitely generated $R$-algebra and hence there is a surjection
$$ R_i := R[t_1,t_2,\ldots,t_{s_i}]\twoheadrightarrow \frac{1}{d_i}U_i,\;\;t_j\mapsto u_j,$$
where $\{d_i,u_1,\ldots, u_{s_i}\}$ is a set of $R$-module generators of $U_i$. If $I_i$ denotes the kernel of this map, we get a ring map
$$ \varphi_i:A_i:=R_i/I_i\stackrel{\cong}{\longrightarrow}\frac{1}{d_i}U_i\subset Q(A).$$

(Note that the definition of $I_i$ is now slightly different from the one given in \ref{lem:aalgisom}.)

\begin{exmp}
\label{exmp:phiJ1}
Carrying on with Example \ref{exmp:J1}, we compute $\varphi_1(J_1) = \varphi_1(\langle t \rangle)$.

Note that $\varphi_1(t) = y^2/x$. However the $A$-module $\langle y^2/x \rangle_A \subsetneq \varphi_1(\langle t \rangle)$. For example, we have seen that $y \in \langle t \rangle_{A_1}$ and clearly $\varphi_1(y) = yx / x$, but $yx \not\in \langle y^2 \rangle_A$.

This shows that in order to obtain $A$-module generators of $\varphi_i(J_i)$ it is not enough to compute the images of the generators of $J_i$. In Algorithm \ref{algorithm:hi} we will show how to compute the generators. In this example, it turns out that $\varphi_1(\langle t \rangle) = \langle yx / x, y^2/x \rangle$ as $A$-module.
\end{exmp}

Once we have computed a test pair $(J_i, p_i)$ in $A_i$, the next step is to compute the quotient $pJ_i :_{A_i} J_i$. The following theorem shows that this computation can be carried out in the original ring $A$.
\begin{thm}
\label{thm:jihi}
Let $A = R/I$, $A' = A[t_1, \dots, t_s]/I'$ a finite ring extension and maps $\psi: A \hookrightarrow A'$, $\varphi:A' \hookrightarrow Q(A)$. Let $(J, p)$ be a test pair for $A$ and $(J', p')$ a test pair for $A'$, with $p' = \psi(p)$. Let $U, H$ be ideals of $A$ and $d \in A$ such that $\displaystyle{\varphi(A') = \frac{1}{d}U}$ and $\displaystyle{\varphi(J') = \frac{1}{d}H}$.  Then
$$(p'J') :_{A'} J' = \frac{1}{d}(d p H :_A H).$$
\end{thm}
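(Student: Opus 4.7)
The plan is to prove the claimed equality by identifying $A'$ with its image $\varphi(A') = \tfrac{1}{d}U \subset Q(A)$ and then translating the colon operation into the ambient ring $A$. Under this identification $J'$ corresponds to $\tfrac{1}{d}H$ and $p' = \psi(p)$ corresponds to $p$, so the ideal $p'J' \subset A'$ corresponds to $\tfrac{1}{d}(pH)$ inside $Q(A)$; here $d$ is implicitly a non-zerodivisor on $A$, as otherwise the expression $\tfrac{1}{d}U$ would not make sense.

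First I would observe that every element $a' \in A'$ has the form $\varphi(a') = u/d$ with a uniquely determined $u \in U$. The condition $a' \in (p'J') :_{A'} J'$ is equivalent to $\varphi(a') \cdot \tfrac{1}{d}H \subset \tfrac{1}{d}(pH)$ in $Q(A)$, and multiplying through by $d^{2}$ turns this into $u H \subset p d H$ inside $A$, i.e., $u \in pdH :_A H$. The translation therefore yields
\[
\varphi\bigl((p'J') :_{A'} J'\bigr) \;=\; \tfrac{1}{d}\bigl(U \cap (pdH :_A H)\bigr).
\]

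The main step is to drop the intersection with $U$, i.e., to prove $pdH :_A H \subset U$. Since $p' \in J'$, applying $\varphi$ gives $p \in \varphi(J') = \tfrac{1}{d}H$, hence $pd \in H$; moreover $pd$ is a non-zerodivisor on $A$ as a product of non-zerodivisors. For any $u \in pdH :_A H$ the membership $u \cdot pd \in pdH$ gives $u \cdot pd = pd \cdot h$ for some $h \in H$, and cancelling the non-zerodivisor $pd$ yields $u = h \in H$. Finally $H \subset U$, because $J' \subset A'$ forces $\tfrac{1}{d}H \subset \tfrac{1}{d}U$ and $d$ is a non-zerodivisor, so $u \in U$ as required.

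Putting the pieces together, $\varphi((p'J') :_{A'} J') = \tfrac{1}{d}(pdH :_A H)$, which is the asserted identity once the identification $A' \cong \varphi(A')$ is removed. The technical heart of the argument is the containment $pdH :_A H \subset U$: it is precisely the presence of the non-zerodivisor $pd$ inside $H$, coming from $p' = \psi(p)$ lying in the test ideal $J'$, that upgrades a mere inclusion into equality.
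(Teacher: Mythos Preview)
Your argument is correct. The identification of $A'$ with $\tfrac{1}{d}U$, the translation of the colon condition into $U\cap(pdH:_A H)$, and the removal of the intersection via $pdH:_A H\subset H\subset U$ all go through as you say; the crucial point---that $pd\in H$ is a non-zerodivisor because $p'=\psi(p)\in J'$---is exactly the right one.

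The paper arrives at the same conclusion by a different packaging. Instead of computing $\varphi\bigl((p'J'):_{A'}J'\bigr)$ directly as $\tfrac{1}{d}\bigl(U\cap(pdH:_A H)\bigr)$ and then arguing the intersection is redundant, it passes through $Q(A)$ and invokes Lemma~\ref{lem:newlem}(2) twice: once (applied to $p'\in J'\subset A'$) to get $(p'J'):_{A'}J'=(p'J'):_{Q(A)}J'=pH:_{Q(A)}H$, and once (applied to $dp\in H\subset A$) to get $\tfrac{1}{d}(dpH:_A H)=\tfrac{1}{d}(dpH:_{Q(A)}H)=pH:_{Q(A)}H$. Your cancellation step ``$u\cdot pd=pd\cdot h\Rightarrow u=h\in H$'' is precisely the content of Lemma~\ref{lem:newlem}(2) specialized to this situation, so you have in effect re-proved the needed instance of that lemma inline. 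The paper's route is shorter and highlights that the theorem is a formal consequence of the general $\Hom$--colon isomorphism; your route is more self-contained and makes the role of the intersection with $U$ explicit before showing it can be dropped.
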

\begin{pf}
The proof is an easy consequence of Lemma \ref{lem:newlem}. Omitting $\varphi$ and $\psi$ in the following notations and applying Lemma \ref{lem:newlem} to $p\in J\subset A$ we get
$$(p'J') :_{A'} J' = (p'J') :_{Q(A)} J' = pH :_{Q(A)} H, $$
since $Q(A')=Q(A)$ and $J'=1/d\cdot H$.

On the other hand, we can apply Lemma \ref{lem:newlem} to $d p\in H\subset A$ and get
\proofend{
$$\frac{1}{d}(d pH:_A H)=\frac{1}{d}(d pH:_{Q(A)}H)=pH :_{Q(A)}H.$$
}
\end{pf}

Using Theorem \ref{thm:jihi} together with the previous results, once we have computed an intermediate ring $A_i$, we can compute $A_{i+1}$, the next ring in the chain. If $A_i=A_{i+1}$, we have finished and $A_i$ is the normalization of the original ring $A$, by Lemma \ref{lem:abnorm}. If not, we proceed by induction to compute the normalization.

We continue with the above example.

\begin{exmp}
\label{exmp:finish}
We have $p = d_1 = x$ and $H_1 = \langle xy, y^2 \rangle_A$. We compute $d_1 p H_1 :_A H_1 = x^2 \langle xy, y^2 \rangle :_A \langle xy, y^2 \rangle = \langle x^2, xy^2 \rangle$.

Then
$$\Hom_{A_1}(J_1, J_1) \cong \frac{1}{x^2}\langle x^2, xy^2 \rangle = \frac{1}{x} \langle x, y^2 \rangle.$$
This is equal to $A_1$. Therefore, the ring $A_1$ was already normal, and hence equal to the normalization of $A$.
\end{exmp}

\begin{modification}
\label{modification:prime}
We have seen that the only computation performed in $A_i$ is the radical of $\psi_i(J)$. However, when the characteristic of the base field is $q > 0$ it is possible to compute also this radical over the original ring. For this, we use the Frobenius map, as described in \citet{M}.

Let $G = \psi_i(J) \subset A_i$. By definition,
$$J_i = \sqrt{G} = \{f \in A_i \mid f^{\,m} \in G \mbox{ for some } m \in \N\}.$$
Mapping to $Q(A)$, we obtain
$$\varphi_i(J_i) = \left\{\tilde f/d_i\:\:\bigg|\:\:\tilde f \in U_i,\:\left(\tilde f/d_i\right)^m \in \varphi_i(G) \mbox{ for some } m \in \N\right\}=\bigcup_{m\geq 1}G_m,$$
where $G_m := \displaystyle{\left\{\tilde f/d_i\:\:\bigg|\:\:\tilde f \in U_i,\:\left(\tilde f/d_i\right)^m \in \varphi_i(G)\right\}}$. Then
$$d_iG_q = \{\tilde f \in U_i \mid \tilde f^{\,q} \in d_i^{\,q}\varphi_i(G)\}.$$

Now $d_i^{\,q}\varphi_i(G)$ is an ideal of $A$ and $d_iG_q$ is the so-called $q$-th root of $d_i^{\,q}\varphi_i(G)$. This ideal can be computed over $A$ using the Frobenius map (cf.~\citet{M}).

By iteratively computing the $q$-th root of the output, until no new polynomials are added, we obtain $\varphi_i(J_i)$ as desired.

\vspace{0.3cm}

Computing the radical in this way, we get another algorithm (in positive characteristic) which is similar to the one proposed in \citet{SS}.
In their algorithm they start with the inclusion $\bar{A} \subset \frac{1}{c}A$, where $c$ is an element of the conductor and compute a decreasing chain of $A$-modules
$$\frac{1}{c}A = \frac{1}{c}U'_0 \supset \frac{1}{c}U'_1 \supset \dots \supset \frac{1}{c}U'_N = \bar{A}.$$

In our algorithm we compute an increasing chain
$$A \subset \frac{1}{d_1}U_1 \subset \dots \subset \frac{1}{d_N}U_N = \bar{A}.$$

The most difficult computational task for both algorithms is the Frobenius map. However, in our algorithm we start with a small denominator $d_1$ and therefore the computations might be in some cases easier. This modification has not yet been tested.
\end{modification}

\section{Algorithms and application}
\label{section:algorithm}
We describe the algorithm in general terms. Since we compute an increasing sequence of subrings of the integral closure the algorithm terminates, for a Noetherian ring $A$, if and only if $\bar{A}$ is a finitely generated $A$-module. By Lemma \ref{lem:fingenmod} this is equivalent to the existence of a test pair. We now deal with the problem of constructing an initial test pair.

\begin{lem}
\label{lem:jacobperf}
Let $k$ be a perfect field, and $A=k[x_1,x_2,\ldots,x_n]/I$ with $I=\ideal{f_1,f_2,\ldots,f_t}$ a reduced equidimensional ring of dimension $r$. Let $M$ be the Jacobian ideal of $I$, that is, the ideal in $A$ generated by the images of the $(n-r)\times(n-r)$-minors of the Jacobian matrix $(\partial f_i/\partial x_j)_{i,j}$. Then $M$ is contained in the conductor of $A$ and contains a non-zerodivisor on $A$.
\end{lem}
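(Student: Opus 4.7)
The proof splits cleanly into two independent parts, corresponding to the two assertions.

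\emph{Existence of a non-zerodivisor in $M$.} The tool is the Jacobian criterion for regularity: since $k$ is perfect and $A$ is equidimensional of dimension $r$, a localization $A_P$ is regular if and only if $M\not\subseteq P$. I would apply this at each minimal prime $P$ of $A$: because $A$ is reduced, $A_P$ is a field, hence regular, so $M\not\subseteq P$. In a reduced Noetherian ring the zerodivisors are exactly the union of the (finitely many) minimal primes, so by prime avoidance $M$ contains an element outside every minimal prime, i.e.\ a non-zerodivisor on $A$.

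\emph{Containment $M\subseteq C$.} This is the classical Lipman--Sathaye type statement for affine algebras. The strategy is to show that each generator $m$ of $M$ (an $(n-r)\times(n-r)$-minor of the Jacobian matrix) satisfies $m\bar A\subseteq A$. I would reduce to a Noether normalization $k[y_1,\dots,y_r]\hookrightarrow A$ so that $A$ (and hence $\bar A$) becomes a finitely generated, torsion-free module over the regular ring $k[y]$. Because $k$ is perfect, the induced extension of total rings of fractions is separable on each irreducible component, so the trace pairing is nondegenerate; a standard computation then identifies the different of $A$ over $k[y]$, up to units, with the image of $M$, and the classical inclusion ``different $\cdot\,\bar A\subseteq A$'' yields the claim. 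Rather than reproducing the trace-form calculation I would cite the packaged version in \citet{SH} (Chapter 13) or the corresponding statement in \citet{GP}.

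\emph{Main obstacle.} The non-zerodivisor part is essentially formal from the Jacobian criterion and prime avoidance. The substantive content is the stronger containment $M\subseteq C$: from the Jacobian criterion one only gets $V(M)\supseteq N(A)$, equivalently $\sqrt M\subseteq\sqrt C$, which would already suffice to produce a test ideal by taking radicals. Upgrading this to $M\subseteq C$ itself is the non-trivial input and is where perfectness of $k$ is genuinely used; in the non-perfect case the Jacobian matrix can vanish identically on a non-regular component and both assertions may fail.
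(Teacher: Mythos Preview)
Your proposal is correct and follows the same overall strategy as the paper: the Jacobian criterion plus prime avoidance for the non-zerodivisor, and the Lipman--Sathaye theorem (cited from \citet{SH}) for the containment $M\subseteq C$.

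One structural point the paper makes explicit and you leave implicit: Lipman--Sathaye is typically stated for integral domains, so the paper first passes to the minimal primes $I=P_1\cap\cdots\cap P_s$, observes that the image of $M$ in each $A_i=k[x]/P_i$ lands in the Jacobian ideal $M_i$ of $P_i$ (this uses equidimensionality so that the minor size is correct for each component), applies Lipman--Sathaye to each domain $A_i$, and then reassembles via $\bar A\cong\bar A_1\oplus\cdots\oplus\bar A_s$ to conclude $M\bar A\subseteq A$. Your sketch gestures at the components (``separable on each irreducible component'') but does not spell out this reduction; since you ultimately cite the packaged statement anyway, it would be worth saying explicitly that you apply it componentwise. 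Apart from that, the two arguments coincide.
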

\begin{pf}
Let $I=P_1\cap P_2\cap\ldots\cap P_s$ with $P_1,P_2,\ldots, P_s$ the minimal associated primes of $I$. Since $A$ is equidimensional, $\dim(A)=\:\:$height$(P_i)=r$ for $1\leq i\leq s$. Hence, the image of $M$ in $A_i=k[x_1,x_2,\ldots,x_n]/P_i$ is contained in the Jacobian ideal $M_i$ of $P_i$. By the Lipman-Sathaye theorem (cf.~\citet{SH} and \citet[Remark 1.5]{SS}) $M_i$ and hence $M$ is contained in the conductor of $A_i$. Since $\bar{A}=\bar{A_1}\oplus\bar{A_2}\oplus\cdots\oplus\bar{A_s}$, $M$ is then also in the conductor of $A$. Moreover, the image of $M$ in $A_i$ is not zero since $A_i$ is reduced. This follows from the Jacobian criterion and by Serre's condition for reducedness (cf.~\citet[Section 5.7]{GP}). As a consequence, $M$ is not contained in the union of the minimal associated primes of $A$ and hence contains a non-zerodivisor on $A$.
\end{pf}

Note that both, the Lipman-Sathaye theorem and the Jacobian criterion, require $k$ to be perfect.

The ideal $J:=\sqrt{M}$ from last lemma can be used as an initial test ideal. To construct a test pair, we need to find in addition a non-zerodivisor of $A$ in $J$. An element $p\in A$ is a non-zerodivisor if and only if $0:_A\ideal{p}=0$, hence the non-zerodivisor test is effective. However, it is not sufficient to apply the test to the generators of $J$. (E.g., if $I=\ideal{xy}$, the polynomials $x,y$ generate $J$ and are zerodivisors on $A$, but $x+y$ is not.) Since we cannot test all elements of $J$ there seems to be a problem to find a test pair if $I$ is not prime.
We address this problem as well as the perfectness and the equidimensionality assumptions in Remark \ref{rem:knotperf}.

We first describe in Algorithm \ref{algorithm:testpair} how to compute the initial test pair $(J, p)$ in $A$, assuming that we are able to find a non-zerodivisor.

\begin{rem} \label{rem:knotperfmain}
Only for this step we need the assumption that $R=k[x_1,x_2,\ldots,x_n]_>$ with $k$ perfect and that $I$ is equidimensional. All further steps do not require this assumption.\\
If, by whatever means, an initial test pair $(J, p)$ for $A$ is known, we can start with the computation of $U_1$ and then all further steps are correct, and the loop terminates with the computation of $\bar{A}$. Hence, for any reduced ring $A=R/I$ with given test pair $(J, p)$, the algorithm is effective when Gr{\"o}bner bases, ideal quotients, and radicals can be computed in rings of the form $R[t_1,t_2,\ldots,t_s]$.
\end{rem}

\begin{algorithm}
\caption{Initial test pair $(J, p)$}
\label{algorithm:testpair}
\begin{algorithmic}
\REQUIRE $I\subset R$, an equidimensional radical ideal, with $R=k[x_1,x_2,\ldots,x_n]_>$ and $k$ a perfect field.
\ENSURE $(J, p)$ a test pair for $A := R/I$.
\STATE $r := \dim(I)$
\STATE $M' :=$ the Jacobian ideal of $I$, i.e., the ideal in $R$ generated by the
\STATE \hspace*{4ex}$(n-r)\times(n-r)$-minors of the Jacobian matrix of $I$
\STATE $M :=$ the image of $M'$ in $A$
\STATE $J := \sqrt{M} \subset A$
\STATE \label{step:nzd}choose $p \in J$ such that $p$ is a non-zerodivisor on $A$
\RETURN $(J, p)$
\end{algorithmic}
\end{algorithm}

We now explain how to perform some auxiliary tasks, that will be needed in the main algorithm.

We have seen in the previous section that if $A = R/I$ and $A' = R[t_1, \dots, t_n]/I'$ a finite extension ring with $I \subset I'$, then there exist a non-zerodivisor $d \in A$, an ideal $U \subset A$ and a map $\varphi: A' \rightarrow 1/d \cdot U$ such that $A' \stackrel{\cong}{\longrightarrow} 1/d \cdot U$. For computations, we need to know how to move from one representation to the other.

\begin{rem} \label{rem:compphi}
If we know $d$ and generators $\{d, u_1, \dots, u_s\}$ of $U$, we can explicitly compute $\varphi(q)$ for any $q \in A'$. Let $\tilde q \in R'$ be a representative, and substitute all the variables $t_j$ in $\tilde q$ by the corresponding fraction $u_j / d$. This results in an element $f / d^{\,e} \in Q(A)$ for some $f \in A$ and $e \in \Z_{\geq 0}$. Now we need to find $f' \in A$ such that $f/d^{\,e} = f' / d$ in $Q(A)$, which is equivalent to $f = f'd^{\,e-1} + g$ in $R$, with $g \in I$. We can find $f'$ by solving the (extended) ideal membership problem $f \in I + \langle d^{\,e-1}\rangle$ in $R$, e.g.~by using the \singular\ command \texttt{lift}, cf.~\citet[Example 1.8.2]{GP}.
\end{rem}

We will need also to compute $A$-module generators of ideals $J' \subset A'$ given by generators in $A'$. It is clear that for any such $J'$ there exist an ideal $H \subset A$ such that $\displaystyle{\varphi(J') = 1/d\cdot H}$. So the problem is equivalent to finding elements $h_1, \dots, h_l$ in $A$ that generate $H$ as an $A$-ideal. In Algorithm \ref{algorithm:hi} we explain how do it.

\begin{algorithm}
\caption{$A$-module generators}
\label{algorithm:hi}
\begin{algorithmic}
\REQUIRE $A = R/I$, with $R = k[x_1, \dots, x_n]$ and $I \subset R$ ideal; $A' = R'/I'$ a ring extension of $A$, with $R' = R[t_1, \dots, t_s]$ and $I' \subset R'$ an ideal; $d \in A'$ a non-zerodivisor and $U' = \langle u_0 = d, u_1, \dots, u_s \rangle_A$ such that $A' \cong \frac{1}{d}U$, with map $\varphi: A' \stackrel{\cong}{\longrightarrow} \frac{1}{d}U$; $J' = \langle f_1, \dots, f_m \rangle_{A'}$, an ideal of $A'$.
\ENSURE $H = \langle h_1, \dots, h_l \rangle_A$ such that $\displaystyle{\varphi(J') = 1/d\cdot H}$
\FOR{$j = 1, \dots, m$}
\STATE compute $h_j$ such that $\varphi_i(f_j) = h_j / d$ (cf. Remark \ref{rem:compphi})
\ENDFOR
\STATE set $S = \{h_1, \dots, h_m\}$
\FOR{$j=1, \dots, m$; $k = 1, \dots, s$}
\STATE compute $h_{j,k} \in A$ such that
$h_{j,k}/d = u_k/d\cdot h_j/d$ in $Q(A)$ (again by Remark \ref{rem:compphi})
\IF{$h_{j,k} \not\in \langle S \rangle_A$}
\STATE $S = S \cup \{h_{j,k}\}$
\ENDIF
\ENDFOR
\RETURN{$H := \langle S \rangle$}
\end{algorithmic}
\end{algorithm}

\begin{lem}
\label{lem:uh}
Let $A = R/I$, with $R = k[x_1, \dots, x_n]$ and $I \subset R$ ideal; $A' = R'/I'$ a ring extension of $A$, with $R' = R[t_1, \dots, t_s]$ and $I' \subset R'$ an ideal; $d \in A'$ a non-zerodivisor and $U' = \langle u_0 = d, u_1, \dots, u_s \rangle_A$ such that $A' \cong \frac{1}{d}U$, with map $\varphi: A' \stackrel{\cong}{\longrightarrow} \frac{1}{d}U$; $J' = \langle f_1, \dots, f_m \rangle_{A'}$, an ideal of $A'$. The output ideal $H = \langle h_1, \dots, h_l \rangle_A$ of Algorithm \ref{algorithm:hi} satisfies $\varphi(J') = 1/d \cdot H$.
\end{lem}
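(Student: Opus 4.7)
The plan is to prove the two inclusions $\frac{1}{d}H \subseteq \varphi(J')$ and $\varphi(J') \subseteq \frac{1}{d}H$ separately. The crucial structural observation is that, since $\varphi : A' \stackrel{\cong}{\rightarrow} \frac{1}{d}U$ is an isomorphism sending $1 \mapsto 1$ and $t_k \mapsto u_k/d$, and $\frac{1}{d}U = A\cdot 1 + \sum_{k=1}^{s} A\cdot (u_k/d)$ because $U = \langle d, u_1, \dots, u_s\rangle_A$, it follows that $A' = A\cdot 1 + \sum_{k=1}^{s} A\cdot t_k$ as an $A$-module. This fact is what justifies the algorithm's choice to multiply each $h_j$ only by single $t_k$'s rather than by arbitrary monomials in the $t_k$.

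For the inclusion $\frac{1}{d}H \subseteq \varphi(J')$, I would note that $\varphi(J')$ is an $A$-submodule of $Q(A)$ since $\varphi$ is $A$-linear and $J'$ is in particular an $A$-module. Each generator of $H$ is easily seen to lie in $d\cdot\varphi(J')$: by construction $h_j/d = \varphi(f_j) \in \varphi(J')$, and
$$h_{j,k}/d \;=\; (u_k/d)\cdot(h_j/d) \;=\; \varphi(t_k)\,\varphi(f_j) \;=\; \varphi(t_k f_j) \;\in\; \varphi(J').$$
Hence every $A$-linear combination of the $h_j$ and $h_{j,k}$, divided by $d$, lies in $\varphi(J')$.

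For $\varphi(J') \subseteq \frac{1}{d}H$, take any $g \in J'$ and write it as an $A'$-linear combination $g = \sum_{j=1}^{m} g_j f_j$ with $g_j \in A'$. Using the $A$-module decomposition above, I expand each coefficient as $g_j = a_{j,0} + \sum_{k=1}^{s} a_{j,k}\,t_k$ with $a_{j,0},a_{j,k}\in A$. Applying $\varphi$ gives
$$\varphi(g) \;=\; \sum_j a_{j,0}\,\frac{h_j}{d} \;+\; \sum_{j,k} a_{j,k}\,\varphi(t_k f_j) \;=\; \frac{1}{d}\Bigl(\sum_j a_{j,0}\,h_j + \sum_{j,k} a_{j,k}\,h_{j,k}\Bigr),$$
which lies in $\frac{1}{d}H$ since all $h_j$ and $h_{j,k}$ are among the generators of $H$.

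The main obstacle is the structural claim that $\{1,t_1,\dots,t_s\}$ generates $A'$ as an $A$-module; once this is in place the rest is a direct unwinding of the definitions. This claim follows from the isomorphism hypothesis, but it is worth stating explicitly, since without it one might worry that products like $t_k t_\ell f_j$ produce new generators not captured by the double loop in Algorithm~\ref{algorithm:hi}. Those products are handled implicitly: the relations in $I'$ re-express $t_k t_\ell$ as an $A$-linear combination of $1,t_1,\dots,t_s$, which is exactly why the single-index loop suffices.
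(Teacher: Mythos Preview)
Your proof is correct and follows essentially the same approach as the paper: both arguments rest on the observation that $\{1,t_1,\dots,t_s\}$ (equivalently $\{1,u_1/d,\dots,u_s/d\}$) generates $A'$ as an $A$-module, so that the products $(u_k/d)\cdot(h_j/d)$ for $0\le k\le s$, $1\le j\le m$ generate $\varphi(J')$ over $A$. Your write-up is simply a more detailed unpacking of the paper's terse argument, with the two inclusions made explicit; the only minor imprecision is the phrase ``among the generators of $H$'' for the $h_{j,k}$, since some may be discarded by the algorithm's membership test, but they still lie in $H$, which is all you need.
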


\begin{pf}
This follows since the $A$-module $\ideal{1=u_0/d, u_1/d_i, \ldots, u_s/d}_A=\varphi(A')$ and the $A'$-module $\ideal{h_1/d,h_2/d,\ldots,h_m/d}_{A'} =\varphi(J')$ ($h_1, \dots, h_m$ as in the algorithm). Therefore the products $u_k/d \cdot h_j/d$, $0 \leq k \leq s, 1 \leq j \leq m$, generate $\varphi(J')$ as $A$-module. Hence $\{h_j\:\mid\:1\leq j\leq l\}$ generates $H$ as $A$-module, or equivalently as $A$-ideal.
\end{pf}

\begin{exmp}
\label{exmp:generators}
We apply the algorithm to compute the $A$-module generators of $\varphi_1(J_1)$ from Example \ref{exmp:phiJ1}. Recall that $J_1 = \langle t \rangle_{A_1}$, $U_1 = \langle x, y^2\rangle_A$ and $d = x$. We start with $h_1 = \varphi_1(t) = y^2/x$ and $S = \{ h_1 \}$. In the first step, we compute $x/x \cdot y^2/x = y^2/x$, therefore $h_{1,0} = y^2$. Since $y^2 \in \langle y^2 \rangle$, we do not do anything. In the second step we compute $y^2/x \cdot y^2/x = y^4/x^2 = x^2y / x^2 = xy / x$, therefore $h_{1,1} = xy$. Since $xy \not\in \langle y^2 \rangle$, we add it to $S$. We finish with $H = \langle xy, y^2\rangle$, as mentioned in Example \ref{exmp:phiJ1}.
\end{exmp}

We are now ready to present in Algorithm \ref{algorithm:normalization} the main algorithm to compute the normalization.

Termination follows from Lemma \ref{lem:fingenmod} and the discussion after Definition \ref{defn:teststuff}, correctness follows from Lemma \ref{lem:abnorm}.

\begin{algorithm}
\caption{Normalization of $R/I$}
\label{algorithm:normalization}
\begin{algorithmic}
\REQUIRE $I\subset R$, an equidimensional radical ideal
\ENSURE generators of an ideal $U \subset R$, and $d \in R$ such that $\overline{A}=\displaystyle{\frac{1}{d} U \subset Q(A)}$,
\STATE \hspace*{8ex}with $A := R/I$.
\STATE compute $(J, p)$, an initial test ideal
\STATE \label{step:U1}$U_1 := (pJ :_A J) \subset A$
\STATE $d_1 := p$
\IF {$\ideal{d_1} = U_1$}
  \RETURN $(\ideal{1}, 1)$
\ENDIF
\STATE $i := 1$
\LOOP
\STATE write $U_i = \ideal{d_i, u^{(i)}_{1}, u^{(i)}_{2}, \dots, u^{(i)}_{s}}_A$
\STATE set $R_i := R[t_1, \dots, t_s]$, $\pi_i: R_i \rightarrow \frac{1}{d_i}U_i \subset \frac{1}{d_i}A$ the map $t_j \mapsto u_j^{(i)}/d_i$
\STATE $I_i := \ker(\pi_i)$ (cf. Lemma \ref{lem:aalgisom})
\STATE set $A_i = R_i / I_i$
\STATE $J_i := \sqrt{\psi_i(J)} \subset A_i$, with $\psi_i: A \hookrightarrow A_i$
\STATE compute $\{f_1, \dots, f_k\} \subset A$ such that $H_i := \ideal{f_1, f_2, \dots, f_k}_A = d_i \varphi_i(J_i)$,
\STATE \hspace*{4ex}with $\varphi_i: A_i \stackrel{\cong}{\longrightarrow} \frac{1}{d_i}U_i$ (cf. Lemma \ref{lem:uh})
\STATE compute generators of $U_{i+1} := (p d_i H_i)\: :_A H_i$
\IF {$d_iU_i \subset U_{i+1}$}
  \RETURN $(U_i, d_i)$
\ENDIF
\STATE $d_{i+1} := p d_i$
\STATE $i := i+1$
\ENDLOOP
\end{algorithmic}
\end{algorithm}

\begin{rem} \label{rem:knotperf}
Let us comment on some variations and generalizations of Algorithm \ref{algorithm:normalization}. For this
let $k$ be {\it any} field, $R=k[x_1,x_2,\ldots,x_n]_>,$ and $I\subset R$ a radical ideal.\\
(1) If $I$ is not (or not known to be) equidimensional we can start with an algorithm to compute the minimal associated primes (cf.~\citet[Algorithm 4.3.4, Algorithm 4.4.3]{GP}) or the equidimensional parts (cf.~\citet[Algorithm 4.4.9]{GP}) of $I$, where the latter is often faster. The corresponding ideals $I_1, I_2,\ldots,I_r$ are equidimensional and we have $\overline{R/I}\cong \overline{R/I_1}\oplus\overline{R/I_2}\oplus\cdots\oplus\overline{R/I_r}$. Hence the problem is reduced to the case of $I$ being prime or equidimensional.\\
(2) Now let $I$ be equidimensional and $M$ the Jacobian ideal. Since regular rings are normal, it follows from the Jacobian criterion that $N(R/I)\subset V(M)$. Let us assume that $M\neq 0$ and choose $p\in M\setminus\{0\}$.\\
a) If $I_1:=I:_R\ideal{p}\subset I$ then $p$ is a non-zerodivisor on $A$ and $J=\sqrt{M}$ is a test ideal. This is always the case if $I$ is prime.\\
b) If $I_1\not\subset I$ we compute $I_2:=I:_R I_1$ and obtain a splitting $I=I_1\cap I_2$ (cf.~\citet[Lemma 1.8.14(3)]{GP}) and $\overline{R/I}\cong \overline{R/I_1}\oplus\overline{R/I_2}$. Hence we can continue with the ideals $I_1$ and $I_2$ separately which have both fewer minimal associated primes than $I$. Consequently, after finitely many splittings, the corresponding ideal is prime or we have found a non-zerodivisor. This provides us with test ideals as in case a).\\
(3) The above arguments show that (even if $k$ is not perfect) Algorithm \ref{algorithm:normalization} works for prime ideals if and only if the Jacobian ideal $M$ is not zero. This is always the case for $k$ perfect. However, if $k$ is not perfect, $M=0$ may occur. For example, consider $k=(\mathbb{Z}/q)(t)$ with $q$ a prime number, and $I=\ideal{x^q+y^q+t}\subset k[x,y]$. For a method to compute a non-zero element in the conductor of $R/I$ if $I$ is prime and if $Q(R/I)$ is separable over $k$, see \citet[Exercise 12.12]{SH}.
\end{rem}

\subsection{The $\delta$-invariant}

As an application of the normalization algorithm we show how to compute the $\delta$-invariant of $A = k[x_1, x_2,\dots, x_n]_>/I$, a reduced Noetherian $k$-algebra,
$$\delta(A) := \dim_k(\bar{A} / A).$$
$\delta(A)$ may be infinite but it is finite if the algebraic variety $V(I)$ defined by $I$ has isolated non-normal points, e.g.~for reduced curves, i.e.~$\dim(A)=1$. In this case, $\delta$ is important as it is the difference between the arithmetic and the geometric genus of a curve. Moreover, the $\delta$-invariant is one of the most important numerical invariants for curve singularities (cf.~\citet{CGL}), that is, for 1-dimensional complete local rings $A$. The extension of our algorithm to non-global orderings in Section \ref{section:extension} has the immediate consequence that it allows to compute $\delta$ for affine rings as well as for local rings of singularities, noting that $\delta(k[x_1,x_2,\ldots,x_n]_>/I)=\delta(k[[x_1,x_2,\ldots,x_n]]/I)$ if $>$ is a local ordering.

\begin{lem}
\label{lem:delta}
Let $R$ be a reduced Noetherian ring, $I \subset R$ be a radical ideal, and $I = P_1 \cap \dots \cap P_r$ its prime decomposition. Write $I = I_1 \cap \dots \cap I_s$, where $I_i = \bigcap_{j \in N_i} P_j$ and $\{N_1, \dots, N_s\}$ is a partition of $\{1, \dots, r\}$. Let $U_i$, $d_i$ be the output of the normalization algorithm for $A_i = R / I_i$. Then
\begin{enumerate}
\item $\delta(A_i) = \dim_k(U_i / d_iU_i),\:\:\:1\leq i\leq s$,
\item \label{item:deltaRI} $\displaystyle{\delta(R / I) = \sum_{i=1}^s \delta(A_i) + \sum_{i = 1}^{s - 1} \dim_k(R/(I + I^{(i)}))},$
where $I^{(i)} = I_{i+1} \cap \dots \cap I_s$.
\end{enumerate}
In particular $\delta(R/I) < \infty$ iff every summand on the right hand side of (\ref{item:deltaRI})~is finite.
\end{lem}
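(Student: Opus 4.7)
The plan is to prove (1) first, then derive (2) from it via a Mayer--Vietoris computation. Set $A=R/I$ and $B=\bigoplus_{i=1}^s A_i$. Throughout I will use the isomorphism $\bar A_i\cong \tfrac{1}{d_i}U_i\subset Q(A_i)$ furnished by the algorithm, where $d_i\in U_i$ is a non-zerodivisor on $A_i$, and hence $A_i\subset \tfrac{1}{d_i}U_i$.

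For (1), I would observe that multiplication by the non-zerodivisor $d_i$ is an $A_i$-module isomorphism $\tfrac{1}{d_i}U_i\xrightarrow{\sim}U_i$ carrying $A_i$ onto $d_iA_i$. Passing to the quotient gives $\bar A_i/A_i\cong U_i/d_iA_i$ as $A_i$-modules, and hence as $k$-vector spaces; taking $k$-dimensions yields the asserted identity.

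For (2), the first ingredient is the standard fact that the normalization of a reduced Noetherian ring splits as a product over its minimal primes: $Q(A)=\prod_{j=1}^r Q(R/P_j)$, whence $\bar A=\prod_{j=1}^r\overline{R/P_j}$. Regrouping the $P_j$ according to the partition $\{N_1,\ldots,N_s\}$ gives $\bar A\cong\bigoplus_{i=1}^s\bar A_i$, compatibly with the natural diagonal map $A\hookrightarrow B\hookrightarrow \bar A$. Combining this with (1) yields
$$\delta(A)=\dim_k(\bar A/B)+\dim_k(B/A)=\sum_{i=1}^s\delta(A_i)+\dim_k(B/A).$$

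The second ingredient computes $\dim_k(B/A)$ by iterating the Mayer--Vietoris short exact sequence
$$0\to R/(J'\cap J'')\to R/J'\oplus R/J''\to R/(J'+J'')\to 0$$
with $J'=I_i$ and $J''=I^{(i)}$, noting that $J'\cap J''=I^{(i-1)}$ (with $I^{(0)}=I$). Splicing these sequences for $i=1,\ldots,s-1$ produces a chain of inclusions from $A=R/I$ up to $B=\bigoplus_{i=1}^s R/I_i$ whose successive cokernels are $R/(I_i+I^{(i)})$. Summing cokernel dimensions gives
$$\dim_k(B/A)=\sum_{i=1}^{s-1}\dim_k R/(I_i+I^{(i)}),$$
which finishes (2). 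The finiteness criterion is then immediate since every summand on the right is non-negative.

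The main obstacle I foresee is the iterated Mayer--Vietoris bookkeeping: one has to verify that splicing really produces an increasing chain of embeddings whose cokernel dimensions add correctly (and that this makes sense even when some summands are infinite). The splitting of $\bar A$ over the partition of minimal primes, though standard, also requires care to check compatibility with the diagonal inclusion $A\hookrightarrow B$.
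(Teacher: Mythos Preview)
Your approach matches the paper's: the same short exact sequence $0\to R/(I_1\cap I_2)\to R/I_1\oplus R/I_2\to R/(I_1+I_2)\to 0$ together with the splitting $\overline{R/I}\cong\bigoplus_i\overline{R/I_i}$, iterated by induction on $s$. Note that your computation in (1) correctly produces $U_i/d_iA_i$ rather than the printed $U_i/d_iU_i$, and your sum in (2) has $I_i+I^{(i)}$ rather than the printed $I+I^{(i)}$; both appear to be typos in the lemma statement (the paper's own remark immediately after the proof writes $I_i+I^{(i)}$, and since $I\subset I^{(i)}$ the printed version of (2) would be trivially wrong).
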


\begin{pf}
This follows by induction on $s$, and by repeatedly applying the following sequence of inclusions for $s = 2$, i.e. $I = I_1 \cap I_2$,
$$R / I \hookrightarrow R/I_1 \oplus R/I_2 \hookrightarrow \overline{R/I_1} \oplus \overline{R/I_2} \cong \overline{R / I},$$
and the exact sequence
$$0 \rightarrow R/I \rightarrow R/I_1 \oplus R/I_2 \rightarrow R/(I_1 + I_2) \rightarrow 0.$$
\end{pf}
Note that $\dim_k R/(I_i + I^{(i)})$ can be computed from a standard basis of $I_i + I^{(i)}$ and $\dim_k(U_i / d_i U_i)$ from a standard basis of a presentation matrix of $U / d_iU_i$ via \texttt{modulo} (cf.~\citet[\singular\ Example 2.1.26]{GP}). An algorithm to compute $\delta$ is also implemented in \singular~\citep{GPS}.

\section{Examples and comparisons}
\label{section:examples}
\begin{table}
\caption{Timings}
\label{table:timings}
\begin{tabularx}{12cm}{|r|r!{\vrule width 1pt}X|r!{\vrule width 1pt}r|r|r|}
\noalign{\hrule height 1pt}
\multirow{2}{*}{No.} & \multirow{2}{*}{char} & \multicolumn{2}{c!{\vrule width 1pt}}{\texttt{normal} data} & \multicolumn{3}{c|}{seconds} \\ \cline{3-7}
& & non-zerodivisor & steps & \texttt{normal} & \texttt{normalP} & \texttt{normalC} \\ \noalign{\hrule height 1pt}
1 & 0 & $y$ & 7 & 0 & - & 72 \\ \hline
1 & 2 & $y$ & 7 & 0 & 0 & 0 \\ \hline
1 & 5 & $y$ & 7 & 1 & 73 & 0 \\ \hline
1 & 11 & $x-2y$ & 7 & 1& 12 & $*$ \\ \hline
1 & 32003 & $y$ & 7 & 0 & $*$ & 1 \\ \noalign{\hrule height 1pt}
2 & 0 & $y$ & 7 & 1 & - & $*$ \\ \hline
2 & 3 & $y$ & 8 & 0 & 0 & 3 \\ \hline
2 & 13 & $y$ & 7 & 0 & $*$ & 10 \\ \hline
2 & 32003 & $y$ & 7 & 0 & $*$ & 10 \\ \noalign{\hrule height 1pt}
3 & 0 & $y$ & 6 & 2 & - & $*$ \\ \hline
3 & 2 & $y$ & 13 & 1 & 0 & $*$ \\ \hline
3 & 5 & $y$ & 6 & 1 & 7 & $*$ \\ \hline
3 & 11 & $x + 4y$ & 6 & 1 & $*$ & $*$ \\ \hline
3 & 32003 & $y$ & 6 & 1 & $*$ & $*$ \\ \noalign{\hrule height 1pt}
4 & 0 & $2x^2y-y^3+y$ & 1 & 0 & - & 0 \\ \hline
4 & 5 & $x^2y+2y^3-2y$ & 1 & 0 & 3 & 0 \\ \hline
4 & 11 & $x^2y+5y^3-5y$ & 1 & 0 & $*$ & 0 \\ \hline
4 & 32003 & $x^2y+16001y^3- 16001y$ & 1 & 0 & $*$ & 0 \\ \noalign{\hrule height 1pt}
5 & 0 & $y$ & 1 & 0 & - & $0$ \\  \hline
5 & 5 & $x^3y+xy$ & 3 & 1 & $*$ & $*$ \\  \hline
5 & 11 & $y$ & 1 & 0 & $0$ & 0 \\  \hline
5 & 32003 & $y$ & 1 & 1 & $*$ & 0 \\  \noalign{\hrule height 1pt}
6 & 2 & $v$ & 2 & 6 & 24 & 172 \\ \noalign{\hrule height 1pt}
7 & 0 & $y$ & 6 & 12 & - & $582$ \\ \hline
7 & 2 & $y$ & 6 & 11 & 0 & $35$ \\ \hline
7 & 5 & $y$ & 6 & 12 & $3$ & $358$ \\ \hline
7 & 11 & $y$ & 6 & 11 & $43$ & $503$ \\ \hline
7 & 32003 & $y$ & 6 & 11 & $*$ & $617$ \\ \noalign{\hrule height 1pt}
\end{tabularx}
\end{table}

In Table \ref{table:timings} we see a comparison of the implementations in \singular\ of the new algorithm \texttt{normal} and other existing algorithms. \texttt{normalC} is an implementation based on the algorithm \citet{DGPJ} (see also \citet[Section 3.6]{GP}) and \texttt{normalP} is an implementation of the algorithm of \citet{LP}, \citet{SS} for positive characteristic. All these implementations are now available in the \singular\ library \texttt{normal.lib} \citep{GLP}. Computations were performed on a compute server running a 1.60GHz Dual AMD Opteron 242 with 8GB ram. \\
$*$ indicates that the algorithm had not finished after 20 minutes,\\
- indicates that the algorithm is not applicable (i.e., using \texttt{normalP} in characteristic 0).

We try several examples over the fields $k = \Q$ and $k = \Z_p, p\in\{2,3,5,11,13,32003\}$, when the ideal is prime in the corresponding ring. We see that the new algorithm is extremely fast compared to the other algorithms. Only the algorithm \texttt{normalP} is sometimes faster for very small characteristic.

In columns 3 and 4 we give additional information on how the new algorithm works. The column ``non-zerodivisor'' indicates which non-zerodivisor is chosen. The column ``steps'' indicates how many loop steps are needed to compute the normalization. We see that our new algorithm performs well compared to the classic algorithm especially when the number of steps needed is large.

We use the following examples:
\begin{itemize}
\item $I_1 = \langle (x-y)x(y+x^2)^3-y^3(x^3+xy-y^2) \rangle \subset k[x, y]$,
\item $I_2 = \langle 55x^8+66y^2x^9+837x^2y^6-75y^4x^2-70y^6-97y^7x^2 \rangle \subset k[x, y]$,
\item $I_3 = \langle y^9+y^8x+y^8+y^5+y^4x+y^3x^2+y^2x^3+yx^8+x^9 \rangle \subset k[x, y]$,
\item $I_4 = \langle (x^2+y^2-1)^3 +27x^2y^2 \rangle \subset k[x, y]$,
\item $I_5 = \langle -x^{10}+x^8y^2-x^6y^4-x^2y^8+2y^{10}-x^8+2x^6y^2+x^4y^4-x^2y^6-y^8+2x^6-x^4y^2+x^2y^4+2x^4+2x 2y^2-y^4-x^2+y^2-1
    \rangle \subset k[x, y] $,
\item $I_6 = \langle z^3+zyx+y^3x^2+y^2x^3, uyx+z^2,uz+z+y^2x+yx^2, u^2+u+zy+zx, v^3+vux+vz^2+vzyx+vzx+uz^3+uz^2y+z^3+z^2yx^2 \rangle \subset k[x, y, z, u, v]$.
\item $I_7 = \langle x^2+zw, y^3+xwt, xw^3+z^3t+ywt^2, y^2w^4-xy^2z^2t-w^3t^3 \rangle \subset k[x, y, z, w, t]$.
\end{itemize}

\begin{rem} \label{rem:normalc} As mentioned before, the main drawback of the algorithm \citet{DGPJ} is the increasing complexity of the new rings that are constructed. A direct implementation of the algorithm turns out to be so slow that it does not even finish for most of the examples analyzed in this paper (after 1 hour).
For example, in the second example ($I_2$) over $\Z_3$, the fifth ring constructed in the chain has 12 variables and 76 generators for the ideal of relations. The sixth ring could not be computed using this direct approach.

A partial solution to this problem, used in implementations, is to eliminate as far as possible redundant variables, that is, variables than can be expressed in term of the others through the relations in the ring. This is what is done in \texttt{normalC}, and it is sometimes a good improvement. However detecting the redundant variables becomes more and more difficult as the relations get more and more complex, adding a new expensive task to the computation, that does not always succeeds in detecting all the relations.

The algorithm proposed in this paper avoids this problem in a natural way.
\end{rem}

We have also compared our implementation with the normalization procedures in Macaulay2
(they use the algorithms \cite{DGPJ} and \cite{SS})
and in Magma (they say that they use \cite{DGPJ} for the
general case; however it seems to work only in characteristic 0 and
the code is not accessible). Our new algorithm is always faster and succeeds
where the other implementations do not finish. We do not know implementations
in other computer algebra systems.

\section{Extension to non-global orderings}
\label{section:extension}

In this section, let $>$ be any monomial ordering on the set $\Mon(x_1, \dots, x_n)$ of monomials in $x = (x_1, \dots, x_n)$. That is, $>$ is a total ordering which satisfies
$$\forall \alpha, \beta, \gamma \in \Z_+^n\:\:\:x^{\alpha} > x^{\beta} \Rightarrow x^{\alpha + \gamma} > x^{\beta + \gamma},$$
but we do not require that $>$ is a well ordering. The main reference for this section is \citet{GP} where the theory of standard basis for such monomial orderings was developed.

We consider the multiplicatively closed set
$$S_> := \{u \in k[x] \smallsetminus \{0\} \mid \LM(u) = 1\},$$
where $\LM$ denotes the leading monomial. The localization of $k[x]$ w.r.t.~$S_>$ is denoted as
$$k[x]_>:= S_>^{-1}k[x] = \left\{\frac{f}{u} \:\:\bigg|\:\: f, u \in k[x], \LM(u) = 1\right\}.$$
It is shown in \citet[Section 1.5]{GP} that $k[x]_>$ is a regular Noetherian ring satisfying
$$k[x] \subset k[x]_> \subset k[x]_{\langle x \rangle},$$
where $k[x]_{\langle x \rangle}$ denotes the localization of $k[x]$ w.r.t.~the maximal ideal $\langle x_1, \dots, x_n \rangle = \langle x \rangle$. Note that
\begin{itemize}
\item $k[x]_> = k[x]\:\:\Leftrightarrow\:\: >$ is global (i.e. $x_i > 1$, $i = 1, \dots, n$), and
\item $k[x]_> = k[x]_{\langle x \rangle}\:\:\Leftrightarrow\:\: >$ is local (i.e. $x_i < 1$, $i = 1, \dots, n$).
\end{itemize}

In applications, in particular in connection with elimination in local rings, we need also \emph{mixed} orderings, where some of the variables are greater than and others smaller than $1$. An important case is the product ordering $>\:=\:(>_1, >_2)$ on $\Mon(x_1,\dots, x_n, y_1,\dots, y_m)$
where $>_1$ is global on
$\Mon(x_1,x_2,\dots, x_n)$ and $>_2$ is arbitrary on $\Mon(y_1,y_2,\dots, y_m)$. Then
$$k[x, y]_> = (k[y]_{>_2})[x] = k[y]_{>_2} \otimes_k k[x],$$
(cf.~\citet[Examples 1.5.3]{GP}), which will be used in the extension of our algorithm to non-global orderings.

We now show that for any monomial ordering $>$ and any radical ideal $I \subset k[x]_>$, the normalization of the ring $k[x]_>/I$ is a finitely generated $k[x]_>/I$-module and how to extend Algorithm \ref{algorithm:normalization} from Section \ref{section:algorithm} to this general situation.

%
%
For any ideal $I \subset k[x]_>$ we have $I = I'k[x]_>$, with $I' = I \cap k[x]$. Let $(k[x]/I')_>$ (resp.~$(\overline{k[x]/I'})_>$) denote the localization w.r.t.~the image of $S_>$ in $k[x]/I'$ (resp.~in $\overline{k[x]/I'}$). We have $k[x]_>/I \cong (k[x]/I')_>$.

\begin{lem}
\label{lem:fingen}
With the above notations, we have an isomorphism
$$\overline{k[x]_>/I} \cong (\overline{k[x]/I'})_>$$
of $k[x]_>$-algebras. In particular, $\overline{k[x]_>/I}$ is a finitely generated $k[x]_>/I$-module.

Moreover, let $\overline{k[x]/I'} \cong k[x, t]/H$ as $k[x]$-algebras with new variables $t = (t_1, \dots, t_s)$ and $H$ an ideal in $k[x, t]$. Then
$$\overline{k[x]_>/I}\:\:\cong\:\:(k[x]_>)[t]\:/\:H(k[x]_>)[t].$$
\end{lem}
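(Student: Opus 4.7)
The plan is to verify the hypotheses of Lemma \ref{lem:abnorm} for the natural map $k[x]_>/I \to (\overline{k[x]/I'})_>$, and then obtain the ``moreover'' statement as a formal consequence.

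First I would check that $I' = I \cap k[x]$ is a radical ideal of $k[x]$: since $k[x]_>$ is Noetherian, $I$ is the intersection of the finitely many minimal primes of $k[x]_>$ containing it, and contracting to $k[x]$ exhibits $I'$ as a finite intersection of prime ideals, hence radical. So $A := k[x]/I'$ is reduced Noetherian; it is an affine $k$-algebra, so its normalization $B := \bar{A}$ is module-finite over $A$ by the classical finiteness result. Let $S$ denote the image of $S_>$ in $A$. Then $A_> := S^{-1}A \cong k[x]_>/I$, and the task reduces to identifying $B_> := S^{-1}B$ with $\overline{A_>}$.

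I would then apply Lemma \ref{lem:abnorm} to the natural map $\psi: A_> \to B_>$. Injectivity is immediate from the exactness of localization. Finiteness is clear since any finite set of $A$-module generators of $B$ also generates $B_>$ as an $A_>$-module. For the inclusion $B_> \subseteq Q(\psi(A_>))$, given $b/s \in B_>$ with $b \in B$ and $s \in S$, write $b = a/c$ with $a \in A$ and $c$ a non-zerodivisor on $A$; by flatness of $A \to A_>$, the image of $c$ remains a non-zerodivisor in $A_>$, so $b/s = \psi(a/s)\cdot\psi(c/1)^{-1}\in Q(\psi(A_>))$. Finally, $B_>$ is normal, as localizations of normal Noetherian rings are normal. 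Lemma \ref{lem:abnorm} then yields $\overline{A_>} \cong B_>$, proving the first isomorphism; the ``in particular'' clause follows at once from the finiteness of $B$ over $A$.

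For the moreover statement, from $\bar{A} \cong k[x,t]/H$ as $k[x]$-algebras and exactness of localization one obtains
\[
B_> \;=\; S^{-1}B \;\cong\; S^{-1}k[x] \otimes_{k[x]} (k[x,t]/H) \;\cong\; k[x]_>[t] \, / \, H\cdot k[x]_>[t],
\]
using that $S^{-1}k[x,t] = k[x]_>[t]$. The main obstacle I anticipate is the inclusion $B_> \subseteq Q(A_>)$: since elements of $S$ may be zerodivisors on $A$, the rings $S^{-1}Q(A)$ and $Q(S^{-1}A)$ need not coincide, so one must avoid trying to relate them directly. The argument above sidesteps this difficulty by working with non-zerodivisors coming from $A$ and invoking flatness of the localization $A \to A_>$.
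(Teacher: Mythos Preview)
Your proof is correct. The paper's own argument is much terser: it simply invokes the well-known fact that localization commutes with normalization for the first isomorphism, notes that module-finiteness is preserved under localization, and for the last statement observes that localizing $k[x,t]$ at $S_>$ only affects the $x$-variables.

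Your route is more explicit: rather than citing the commutation of localization and normalization as a black box, you re-derive it from Lemma~\ref{lem:abnorm} by verifying injectivity, finiteness, containment in the total ring of fractions, and normality of the localized target. This buys self-containment within the paper's own toolkit and makes the genuine subtlety about non-zerodivisors under localization visible (your observation that $S^{-1}Q(A)$ and $Q(S^{-1}A)$ need not coincide is exactly the point one must handle). The cost is length. One minor presentational remark: the equation $b/s = \psi(a/s)\cdot\psi(c/1)^{-1}$ is written in $Q(\psi(A_>))$, so it already presupposes the embedding $B_> \hookrightarrow Q(A_>)$ you are trying to establish; it would be cleaner to first \emph{define} the map $B_> \to Q(A_>)$ by $b/s \mapsto (a/1)\cdot(cs/1)^{-1}$, check well-definedness and injectivity, and then read off condition~(3) of Lemma~\ref{lem:abnorm}. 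The underlying mathematics, however, is sound.
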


\begin{pf}
The first statement follows immediately from the well-known fact that localization commutes with normalization. Since $\overline{k[x]/I'}$ is module-finite over $k[x]/I'$ the same holds for the localization $(\overline{k[x]/I'})_>$ over $(k[x]/I')_>$. The last statement follows since the image of $S_>$ in $k[x,t]$ localizes $k[x, t]$ only w.r.t.~the $x$ variables.
\end{pf}

\begin{rem}
Let $f_1, f_2, \dots, f_s \in k[x]$ generate $I = \ideal{f_1, f_2, \dots, f_s}k[x]_>$ and let $I'$ denote the ideal generated by $f_1,f_2,\ldots,f_s$ in $k[x]$. We can compute $\overline{k[x]_>/I}$ in two different ways.

The first method is to compute a test ideal $J$ and $\Hom_{k[x]_>/I}(J, J)$ in the same manner as described in the previous sections, just w.r.t.~the ordering $>$, i.e. in $k[x]_>$. When adding new variables $t_i$ (corresponding to $k[x]_>$-module generators of $\Hom_{k[x]_>/I}(J, J)$) we define on $k[t, x]$ a block ordering $(>_1, >)$ with $>_1$ a global ordering on the (first) $t$-block (i.e. $t_i > 1$ for all $i$ and $t_i > x_j$ for all $i, j$) and $>$ the given ordering on the (second) $x$-block. Then we continue with this new ring and monomial ordering.

This algorithm is correct (by applying Lemma \ref{lem:abnorm} to $A = k[x]_>/I$) and terminates because $\overline{k[x]_>/I}$ is finitely generated over $k[x]_>/I$ by Lemma \ref{lem:fingen}.

The second method is to compute the normalization of $k[x]/I'$ as in the previous section, with all variables greater than $1$. Then we map the result to $k[t, x]_{>_1, >}$ with block ordering $(>_1, >)$ as for the first method. By Lemma \ref{lem:fingen} both methods give the same result, hence the second algorithm is also correct.

If we start with an equidimensional decomposition $I' = \bigcap_{i=1}^r I_i$, then of course we only need to compute the normalization for those ideals $I_i$ for which a standard basis of $I_i$ w.r.t.~the ordering $>$ does not contain $1$.
\end{rem}

\begin{exmp}
To see the difference between both methods, let
$$I = \langle y^2 - x^2(x+1)^2(x+2) \rangle \subset R := k[x, y]_>,$$
with $>$ a local ordering (i.e. $k[x, y]_> \cong k[x, y]_{\langle x, y \rangle}$). Let $I' = I \cap k[x, y]$. In Figure \ref{fig:curve} we can see the real part of the curve $\V(I')$. This curve has two singularities, at the points $P_1 = (0, 0)$ and $P_2 = (-1, 0)$.

\begin{figure}[ht]
\begin{center}
\includegraphics[scale=0.6]{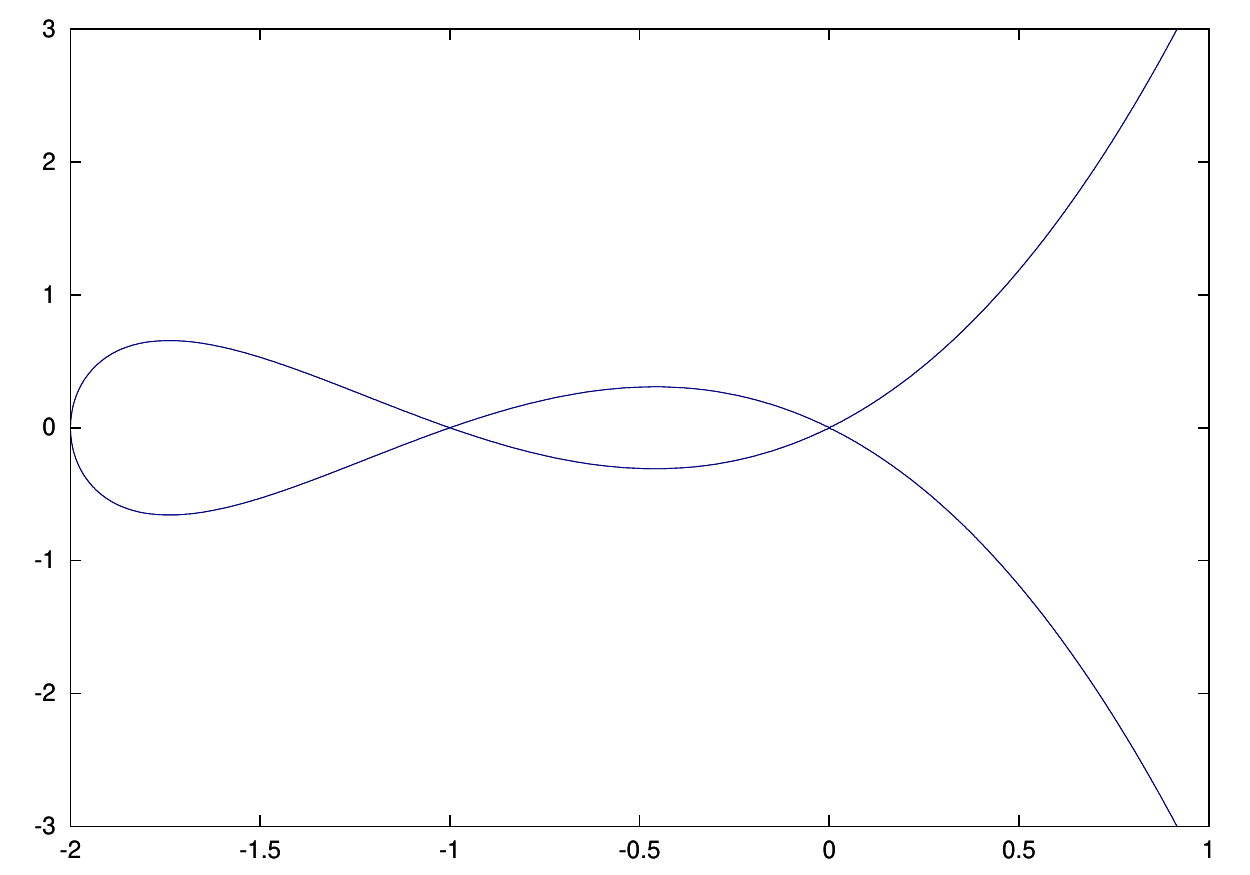}
\end{center}
\caption{$y^2 - x^2(x+1)^2(x+2)$}
\label{fig:curve}
\end{figure}

We carry out the first method, setting $A = R/I$. The singular locus of $I$ is $J = \langle x, y\rangle$, which is radical. This is the first test ideal. We take as non-zerodivisor $p := y$ and compute the quotient
$$U_1 := yJ :_{A} J = \langle x, y\rangle.$$
Since $U_1 \neq \langle y \rangle$ we go on.
The ring structure of $1/y \cdot U_1$ is $A_1 = k[t, x, y]_{>_1, >} / I_1$, with block ordering $(>_1, >)$ ($>_1$ any ordering) and
$I_1 = \langle
tx^4+4tx^3+5tx^2+2tx-y,
-ty+x,
t^2(x+1)^2(x+2)-1,
x^5+4x^4+5x^3+2x^2-y^2
\rangle
$.

We compute $J_1 := \sqrt{\varphi_1(\langle x, y\rangle)} = \langle x, y, 2t^2-1\rangle_{A_1}$.

Mapping $J_1$ to $Q(A)$ using $d_1 = y$ as denominator, we get $J_1 \cong 1/y \cdot H_1$, with $H_1 := \langle yx, y^2 \rangle$. (The image of $2t^2-1$ in $Q(A)$ is $(-10xy-8x^2y-2x^3y)/y$, which is already in $1/y \cdot \langle yx, y^2 \rangle$.)
We compute the quotient
$$U_2 := y^2 \langle yx, y^2 \rangle :_A \langle yx, y^2 \rangle = \langle xy, y^2\rangle.$$
We see that $y U_1 = U_2$. This means that $A_1$ was already normal and isomorphic to the normalization of $A$, which is therefore $1/y \cdot \langle x, y\rangle_{A}$.\\[1ex]
Let us now apply the second method. We set $R' := k[x, y]$ and $A' = R' / I'$. The singular locus of $I'$ is $J = \langle x^2 + x, y\rangle$, which is radical. $J$ serves as first test ideal. As non-zerodivisor we choose $p := y$ and compute the quotient
$$U_1 := yJ :_{A'} J = \langle y, x^3 + 3x^2 + 2x\rangle.$$
As $U_1 \neq \langle y \rangle$, we continue. We compute $A'_1$, the ring structure of $1/y \cdot U_1$,
$A'_1 = k[t, x, y]/\langle tx^2+tx-y, -ty+x^3+3x^2+2x, t^2-x-2, x^5+4x^4+5x^3+2x^2-y^2 \rangle,$
and $J_1 = \sqrt{\varphi_1(\langle x^2 + x, y\rangle)} = \langle x^2 + x, y\rangle$.

Mapping $J_1$ to $Q(A')$ using $d_1 = y$ as denominator, we obtain $J_1 \cong 1/y \cdot H_1$, with $H_1 := \langle y(x^2 + x), y^2 \rangle$.
We compute the quotient
$$U_2 := y^2 \langle y(x^2 + x), y^2 \rangle :_{A'} \langle y(x^2 + x), y^2 \rangle = \langle y^2, y(x^3 + 3x^2 + 2x)\rangle.$$
Now we have $y U_1 = U_2$, and thus $A'_1$ was already normal and isomorphic to the normalization of $A'$. Therefore, the normalization $\bar{A}$ equals $1/y \cdot \langle y, x^3 + 3x^2 + 2x\rangle_{A} = 1/y \cdot \langle y, x \rangle_{A}$, as before.
\end{exmp}

\begin{rem}
In the previous example, using the first method yields simpler test ideals and quotients. However, our experience is that in general, computations with non-global orderings are often slower than computations with global orderings, and therefore the second method should be preferred at least if the input ideal is prime. On the other hand the computation should be faster with the first method if the ideal, or its jacobian ideal, has complicated components which vanish in the localization.
\end{rem}

{\bf Acknowledgements.} 
This work was done while the second author was visiting the University of Kaiserslautern in February 2009; he is very grateful to the \singular\ team for their kind hospitality. We also like to thank the referees for their comments which helped to improve the presentation of the results.

\bibliographystyle{elsart-harv}

\end{document}